\newtheorem{thm}{Theorem} [section]
\newtheorem{cor}[thm]{Corollary}
\newtheorem{lem}[thm]{Lemma}
\newtheorem{prop}[thm]{Proposition}
\theoremstyle{definition}
\newtheorem{definition}[thm]{Definition}
\theoremstyle{remark}
\newtheorem{rem}[thm]{Remark}
\numberwithin{equation}{section}
\begin{document}

\newcommand{\thmref}[1]{Theorem~\ref{#1}}
\newcommand{\secref}[1]{Section~\ref{#1}}
\newcommand{\lemref}[1]{Lemma~\ref{#1}}
\newcommand{\propref}[1]{Proposition~\ref{#1}}
\newcommand{\corref}[1]{Corollary~\ref{#1}}
\newcommand{\remref}[1]{Remark~\ref{#1}}
\newcommand{\eqnref}[1]{(\ref{#1})}

\newcommand{\exref}[1]{Example~\ref{#1}}

\newcommand{\nc}{\newcommand}
 \nc{\Z}{{\mathbb Z}}
 \nc{\C}{{\mathbb C}}
 \nc{\N}{{\mathbb N}}
 \nc{\F}{{\mf F}}
 \nc{\Q}{\mathbb{Q}}
 \nc{\la}{\lambda}
 \nc{\ep}{\epsilon}
 \nc{\h}{\mathfrak h}
 \nc{\n}{\mf n}
 \nc{\G}{{\mathfrak g}}
 \nc{\DG}{\widetilde{\mathfrak g}}
 \nc{\SG}{\breve{\mathfrak g}}
 \nc{\is}{{\mathbf i}}
 \nc{\V}{\mf V}
 \nc{\bi}{\bibitem}
 \nc{\E}{\mc E}
 \nc{\ba}{\tilde{\pa}}
 \nc{\half}{\frac{1}{2}}
 \nc{\hgt}{\text{ht}}
 \nc{\mc}{\mathcal}
 \nc{\mf}{\mathfrak} 
 \nc{\hf}{\frac{1}{2}}
\nc{\ov}{\overline}
\nc{\ul}{\underline}
\nc{\I}{\mathbb{I}}
\nc{\aaa}{{\mf A}}
\nc{\xx}{{\mf x}}
\nc{\id}{\text{id}}
\nc{\one}{\bold{1}}
\nc{\Qq}{\Q(q)}
\nc{\mA}{\mathcal{A}}
\nc{\mH}{\mathcal{H}}
\nc{\bk}{\mathbb{K}}

\newcommand{\U}{\bold{U}}
\newcommand{\Ui}{\bold{U}^\imath}
\newcommand{\VV}{\mathbb{V}}
\newcommand{\B}{\bold{B}}

\newcommand{\blue}[1]{{\color{blue}#1}}
\newcommand{\red}[1]{{\color{red}#1}}
\newcommand{\green}[1]{{\color{green}#1}}
\newcommand{\white}[1]{{\color{white}#1}}

\newcommand{\htodo}{\todo[inline,color=orange!20, caption={}]}

\title[Positivity of $\imath$-Canonical bases]
{Positivity of $\imath$-Canonical bases}
 
 \author[Huanchen Bao]{Huanchen Bao}
\address{Department of Mathematics, University of Maryland, College Park, MD 20742}
\email{huanchen@math.umd.edu}

\begin{abstract}
For the quantum symmetric pair $(\U,\Ui)$ of type AIII/AIV, we show various positivity properties of the $\imath$-canonical bases on finite-dimensional simple $\U$-modules, as well as their tensor product.

\end{abstract}

\maketitle

\let\thefootnote\relax\footnotetext{{\em 2010 Mathematics Subject Classification.} Primary 17B10.}




\section*{Introduction}
We write $\U = \U_q(\mathfrak{sl}_{n+1})$ as the quantum group over the field $\Q(q)$ with a generic parameter $q$ through out this paper. We denote by $\N$ the set of non-negative integers.

Let $(\U, \Ui)$ be the quasi-split (that is, without black dots in the Satake diagram) quantum symmetric pair of type AIII/AIV. The structure theory of quantum symmetric pairs was established by Letzter (\cite{Le02}). 

It has been recently discovered that this quantum symmetric pair is closely related with the BGG category $\mc{O}$ of type $B/C/D$ (\cite{BW13, ES13}). The canonical bases arising from this quantum symmetric pair have been constructed in \cite{BW13}, which were then used to reformulate the Kazhdan-Lusztig theory of type B/C. This construction has been adapted in \cite{Bao17} for the type D setting. The related geometric construction and categorification have been obtained in \cite{BKLW} and \cite{BSWW}, respectively. 

We denote by $X$ the integral weight lattice of $\U$, and denote by $X^+$ the set of dominant weights. For any $\lambda_1, \dots, \lambda_k \in X^+$, the tensor product of finite-dimensional simple $\U$-modules $L(\lambda_1) \otimes \cdots \otimes L(\lambda_k)$ admits the canonical basis $B(\lambda_1, \dots, \lambda_k)$ (\cite{Lu94}), as well as the $\imath$-canonical basis $B^\imath(\lambda_1, \dots, \lambda_k)$ (\cite{BW13, Bao17}). 
We prove the following positivity results in this paper.

\begin{thm}[Theorems~\ref{thm:BW13} and \ref{thm:Bao17}]Let $ 0 \le l \le k$.
We have 
\[
b^\imath = \sum_{b^\imath_1, b_2} t_{b; b_1, b_2} b^\imath_{1} \otimes b_2, \qquad \text{with } t_{b; b_1, b_2} \in \N[q],\]
where $b^\imath \in B^\imath(\lambda_1, \dots, \lambda_k)$, $b^\imath_1 \in B^\imath(\lambda_1 , \dots, \lambda_l)$, and $b_2 \in B(\lambda_{l +1}, \dots, \lambda_k )$.
\end{thm}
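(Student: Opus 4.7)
The plan is to interpolate between two extreme cases of the statement that are already in hand. For $l = k$ the claim is tautological (the transition matrix is a Kronecker delta), while for $l = 0$ it reduces to the positive expansion of $B^\imath(\lambda_1, \dots, \lambda_k)$ in $B(\lambda_1, \dots, \lambda_k)$ given by Theorems~\ref{thm:BW13} and \ref{thm:Bao17}. For intermediate $l$ the natural object on $L(\lambda_1) \otimes \cdots \otimes L(\lambda_k)$ is the \emph{hybrid family} $B^\imath(\lambda_1, \dots, \lambda_l) \otimes B(\lambda_{l+1}, \dots, \lambda_k)$, and the heart of the proof is to show that each $b^\imath$ expands in this hybrid family with $\N[q]$-coefficients.

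Writing $M = L(\lambda_1) \otimes \cdots \otimes L(\lambda_l)$ and $N = L(\lambda_{l+1}) \otimes \cdots \otimes L(\lambda_k)$, I would first establish that the hybrid family is an $\mA$-basis of the integral lattice of $M \otimes N$ and that the $\imath$-bar involution $\psi_\imath$ on $M \otimes N$ acts on it by an upper-unitriangular operator with respect to a weight partial order. The key input is the intertwining property of the quasi-K-matrix $\Upsilon$, under which $\psi_\imath$ on $M \otimes N$ factors (through the coproduct) as $\Upsilon \cdot (\psi^M_\imath \otimes \psi^N)$, where $\psi^N$ denotes the ordinary $\U$-bar involution on $N$. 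The failure of $b^\imath_1 \otimes b_2$ to be $\imath$-bar invariant is thus governed by $(\Upsilon - 1) \cdot (b^\imath_1 \otimes b_2)$, and the standard uniqueness argument identifies each $b^\imath$ as the unique $\imath$-bar invariant element of the hybrid lattice congruent to some $b^\imath_1 \otimes b_2$ modulo the appropriate filtration.

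Positivity of the resulting coefficients is produced by combining three inputs: the positive expansion of $\Upsilon$ in the canonical basis of $\U^-$ (assumed from the preceding sections and reflecting the type AIII/AIV case); Lusztig's positivity for the action of canonical basis elements of $\U^-$ on $B(N)$; and the $l = 0$ positivity applied recursively to the smaller tensor product $M$, ensuring that $B^\imath(M)$ is itself positive in $B(M)$. Together these show that each correction step $b^\imath_1 \otimes b_2 \mapsto \Upsilon \cdot (b^\imath_1 \otimes b_2)$ produces an $\N[q]$-combination of hybrid basis elements, and the iterative construction recovering $b^\imath$ from $b^\imath_1 \otimes b_2$ preserves positivity at every step.

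The principal obstacle lies in the first stage: although the intertwining property of $\Upsilon$ is a familiar theme, tracking its action across the $M$/$N$ interface demands joint control of both factors, since $\Upsilon$ acts through the coproduct and genuinely mixes them. I anticipate that the cleanest formulation will proceed by induction on the weight gradation of the $N$-side, using the recursive defining equation for $\Upsilon$ together with the positivity inputs inherited from the smaller tensor product $M$.
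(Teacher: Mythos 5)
Your proposal follows a genuinely different route from the paper, and it contains gaps that make it incomplete as stated.

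First, there is a circularity problem in your setup. You treat the case $l=0$ as already established (``it reduces to the positive expansion of $B^\imath(\lambda_1,\dots,\lambda_k)$ in $B(\lambda_1,\dots,\lambda_k)$ given by Theorems~\ref{thm:BW13} and \ref{thm:Bao17}''), but these are precisely the theorems you are trying to prove; $l=0$ is a particular instance of the statement, not an available input. The paper does not prove $l=0$ separately and then interpolate --- it proves all $l$ simultaneously by a uniform mechanism.

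Second, the positivity engine you invoke is not available. You assume a positive expansion of the quasi-K-matrix $\Upsilon$ in the canonical basis of $\U^-$ ``from the preceding sections,'' but the paper never establishes (nor cites) such a result, and no such positivity is used anywhere in the argument. Even granting it, the step where you pass from positivity of the pieces (positivity of $\Upsilon$, Lusztig's positivity of the $\U^-$-action on canonical bases, inductive positivity on the $M$-side) to positivity of the coefficients $t_{b;b_1,b_2}$ does not follow: the bar-invariant element $b^\imath$ is produced by a successive-approximation algorithm that repeatedly \emph{subtracts} lower-order corrections, and subtraction of positive quantities need not remain positive. This is exactly the obstruction that forces both the paper and the earlier literature to reach for a structural source of positivity rather than bookkeeping through the recursion.

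The paper's route is entirely different and avoids both issues. It transfers the problem to Hecke algebras via $\imath$-Schur duality (Proposition~\ref{prop:BD} identifies the $\imath$-canonical basis of $\VV^{\otimes m}$ with a parabolic Kazhdan--Lusztig basis of $\mc{H}_{B_m}$, and Proposition~\ref{prop:A} does the same for the ordinary canonical basis and the type $A$ parabolic), and then appeals to the positivity of the expansion of a canonical basis element of $\mc H$ in the \emph{hybrid basis} relative to a parabolic subalgebra (Propositions~\ref{prop:GH} and \ref{prop:MJ}), which ultimately rests on Braden's hyperbolic localization or the Grojnowski--Haiman argument. Lemma~\ref{lem:p+} is the computation that makes this work: it identifies the parabolic hybrid coefficients $p^{I,+}_{y,w}$ with ordinary hybrid coefficients $p^{I}_{y',w'}$ up to a power of $q$. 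Finally, the reduction from $\VV^{\otimes m}$ to arbitrary tensor products $L(\lambda_1)\otimes\cdots\otimes L(\lambda_k)$ is carried out entirely through based morphisms (Lemmas~\ref{lem:tensor}, \ref{lem:wedge}, \ref{lem:emb}, Corollary~\ref{cor:Ubased}, Lemma~\ref{lem:Uibased}), which send $\imath$-canonical basis elements to $\imath$-canonical basis elements or zero, so positivity is transported rather than reproved. If you want to salvage a $\Upsilon$-based argument, you would need to first establish a genuine positivity theorem for $\Upsilon$ in type AIII/AIV and then find a mechanism that converts it to positivity through the subtraction steps --- neither is in the paper, and the Hecke-algebra route is the cleaner path.
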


We actually identify these coefficients $t_{b; b_1, b_2}$ with certain polynomials arising from the (variations of) Kazhdan-Lusztig bases of various Hecke algebras. In  extreme cases, they are exactly the (parabolic) Kazhdan-Lusztig polynomials. To establish such identification, we use Lusztig's theory of based modules (\cite[Chap. 27]{Lu94}), as well as its $\imath$-counterpart for quantum symmetric pairs. 

Special case of the theorem has already been obtained in \cite{FL15}, where they consider the case all $L(\lambda_i)$ being the natural representation $\VV$ and $l=0$.

We have the following positivity result on finite-dimensional simple $\U$-modules, which answers affirmatively \cite[Conjecture~4.24]{BW13}. 
\begin{cor}
Let $\lambda \in X^+$. Let $B^\imath (\lambda)$ and $B(\lambda)$ be the sets of $\imath$-canonical basis  and canonical basis on $L(\lambda)$, respectively. We have 
\[
b^\imath = \sum_{b_1 \in B} t_{{b; b_1}} b_1, \qquad \text{ with }  t_{{b; b_1}} \in \N[q].
\]
\end{cor}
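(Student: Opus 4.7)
The plan is to obtain the corollary as an immediate specialization of Theorem~1.1 to a single tensor factor. Taking $k = 1$, $l = 0$, and $\lambda_1 = \lambda$ in that theorem, the indexing set $B^\imath(\lambda_1,\ldots,\lambda_l)$ degenerates to the $\imath$-canonical basis of the empty tensor product, namely the single vector $1$ in the trivial module $\Q(q)$, while $B(\lambda_{l+1},\ldots,\lambda_k)$ becomes the ordinary canonical basis $B(\lambda)$ of $L(\lambda)$. The theorem thus yields
\[
b^\imath = \sum_{b \in B(\lambda)} t_{b^\imath;\, 1,\, b}\, (1 \otimes b), \qquad t_{b^\imath;\, 1,\, b} \in \N[q],
\]
and the canonical identification $\Q(q) \otimes L(\lambda) \cong L(\lambda)$, sending $1 \otimes b$ to $b$, delivers the claimed positivity.

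If the empty-tensor-product convention should be avoided, an equivalent route is to invoke Theorem~1.1 with $k = 2$, $l = 1$, $\lambda_1 = \lambda$, and $\lambda_2 = 0$. The trivial representation $L(0)$ is one-dimensional, both its canonical and $\imath$-canonical bases consist of the distinguished vector $1$, and the $\U$-module isomorphism $L(\lambda) \otimes L(0) \cong L(\lambda)$ intertwines the bar involutions and matches the two systems of bases on both sides. The expansion supplied by the theorem then transports verbatim to an $\N[q]$-expansion of $b^\imath \in B^\imath(\lambda)$ in $B(\lambda)$.

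There is no substantive obstacle; the only item requiring attention is the compatibility of normalizations, namely that the $\imath$-canonical basis on the empty tensor product (equivalently, on $L(0)$) coincides with the single vector $1$, and that the isomorphism $L(\lambda) \otimes L(0) \cong L(\lambda)$ matches the two families of distinguished bases on the nose. Both points are built into the constructions of \cite{BW13, Bao17}, so the corollary follows at once from Theorem~1.1.
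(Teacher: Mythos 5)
Your primary route, taking $k=1$ and $l=0$ in Theorem~1.1, is exactly the specialization the paper intends: the $\imath$-canonical basis factor degenerates to the trivial module and the theorem directly yields an $\N[q]$-expansion of $B^\imath(\lambda)$ in $B(\lambda)$. That is the correct and intended proof.

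One small error in your alternative route: with $k=2$, $l=1$, $\lambda_1=\lambda$, $\lambda_2=0$, the theorem expands $b^\imath \in B^\imath(\lambda,0)$ in the basis $B^\imath(\lambda_1)\otimes B(\lambda_2) = B^\imath(\lambda)\otimes B(0)$, i.e.\ it re-expresses $\imath$-canonical basis elements in terms of $\imath$-canonical basis elements, not in terms of the ordinary canonical basis $B(\lambda)$ as the corollary requires. To make the trivial-module trick work you must put the trivial factor on the left: take $\lambda_1=0$, $\lambda_2=\lambda$, $l=1$, so that $B^\imath(\lambda_1)=B^\imath(0)=\{1\}$ and $B(\lambda_2)=B(\lambda)$. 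Since your first route already gives a clean proof, this is only a slip in the fallback argument, not in the main one.
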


Our method can easily be adapted to give a simple proof (see \cite{We15} for a proof via categorification) of a similar positivity statement for Lusztig's canonical bases, that is, for $b \in B(\lambda_1, \dots, \lambda_k)$, $b_1 \in B(\lambda_1 , \dots, \lambda_l)$, and $b_2 \in B(\lambda_{l +1}, \dots, \lambda_k )$, we have
\[
b = \sum_{b^\imath_1, b_2} t'_{b; b_1, b_2} b_{1} \otimes b_2, \qquad \text{with } t'_{b; b_1, b_2} \in \N[q].
\]


{\bf Acknowledgement:} The paper was inspired by conversations with Yiqiang Li and Weiqiang Wang during the author's visit to UVa. We would like to thank them for the discussion. Early results were presented in the algebra seminar of UVa. We would like to thank Weiqiang Wang for the invitation and continued encouragement throughout the years.



\section{The Hecke algebras} \label{sec:Hecke}
\subsection{}
Let $(W, S)$ be a finite Weyl group. We denote the length function by $\ell ( \cdot)$. For any $J \subset S$, we denote by $W_J$ the corresponding parabolic subgroup, and denote by $w_J$ the longest element of $W_J$. 

Let $W^J$ and ${}^JW$ be the set of minimal length coset representatives of $W/W_J$ and $W_J\backslash W$, respectively. 
For $I, J \subset S$, we denote by ${}^I W^J$ the set of minimal length double coset representatives of $W_I \backslash W/ W_J$. For any (left, right, or double) coset $p$, we denote by $p_-$ the minimal length representative of this coset. We shall abuse the notation and denote the Bruhat orders on $W$, as well as on any cosets, by $\le$. The following lemma can be found in \cite[Theorems~2.7.4 and 2.7.5]{Ca93}.
\begin{lem}\label{lem:HK}
Let $I, J \subset S$ and $p \in W_I \backslash W/ W_J$. Let $K = I \cap p_- J p^{-1}_-$.
	\begin{enumerate}
	\item We have 
\[
W_I \cap p_- W_J p^{-1}_- = W_{I \cap p_- J p^{-1}_-}.
\]
	\item The map 
\[
	\begin{split}
(W^K \cap W_I) \times W_J &\longrightarrow p\\
	(u, v) &\mapsto up_-v
	\end{split}
\]
is a bijection and $\ell (up_-v) = \ell (u) + \ell( p_-) + \ell( v)$.
\end{enumerate}
\end{lem}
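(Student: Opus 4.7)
The plan is to prove parts (1) and (2) in turn, in both cases exploiting the fact that $p_-$ is minimal in its double coset $W_I p_- W_J$. This minimality translates into $p_- \in {}^I W \cap W^J$, equivalently $\ell(tp_-) = \ell(p_-)+1$ for all $t \in I$ and $\ell(p_- s) = \ell(p_-)+1$ for all $s \in J$, and more strongly $\ell(tp_- s) \ge \ell(p_-)$ for all $t \in I$, $s \in J$.

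For part (1), the inclusion $W_K \subseteq W_I \cap p_- W_J p_-^{-1}$ is immediate from the definition of $K$. For the reverse inclusion, the cleanest route is via inversion sets: if $w = p_- v p_-^{-1} \in W_I$ with $v \in W_J$, then the inversion set $\Phi(w) = p_-\Phi(v)$ lies in both $\Phi^+_I$ (since $w \in W_I$) and in $p_-\Phi^+_J$ (since $v \in W_J$ and $p_- \in W^J$ sends $\Phi^+_J$ into $\Phi^+$). So $\Phi(w) \subseteq \Phi^+_I \cap p_-\Phi^+_J$, and it suffices to show that this intersection equals $\Phi^+_K$. The nontrivial point here is that the minimality of $p_-$ in its double coset forces $p_-\alpha_s$ to be a simple root of $\Phi_I$ whenever it lies in $\Phi^+_I$ for some $s \in J$; a positive non-simple root of $\Phi_I$ arising as $p_- \alpha_s$ would, through the corresponding reflection $p_- s p_-^{-1}$, produce an element $tp_- s'$ of the double coset strictly shorter than $p_-$, contradicting minimality.

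For part (2), surjectivity of $(u,v) \mapsto up_- v$ is immediate from part (1): any $x$ in the double coset has the form $x = w p_- y$ with $w \in W_I$, $y \in W_J$; the decomposition $W_I = (W^K \cap W_I) \cdot W_K$ (a bijection with length-additivity, since $K \subseteq I$ implies minimal $W_K$-coset representatives in $W_I$ coincide with $W^K \cap W_I$) lets us write $w = uk$, and the relation $W_K \subseteq p_- W_J p_-^{-1}$ absorbs $k$ into the $W_J$-factor. Injectivity is immediate from part (1): $u p_- v = u' p_- v'$ forces $(u')^{-1} u \in W_I \cap p_- W_J p_-^{-1} = W_K$, so $u = u'$ by uniqueness of minimal $W_K$-coset representatives, whence $v = v'$. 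The length identity is established in two stages: first $\ell(u p_-) = \ell(u) + \ell(p_-)$ by induction on $\ell(u)$, using $p_- \in {}^I W$ (any length defect would yield, via strong exchange, either a shorter reduced expression for $u \in W_I$ or a shorter representative of $W_I p_-$); symmetrically $\ell(p_- v) = \ell(p_-) + \ell(v)$; finally one proves $\ell(u p_- v) = \ell(u p_-) + \ell(v)$ by showing $u p_- \in W^J$.

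The main obstacle is the last step: proving $u p_- \in W^J$. The estimate $\ell(u p_- v) \le \ell(u) + \ell(p_-) + \ell(v)$ is trivial; the reverse requires ruling out cancellation across all three factors, and this is where the hypothesis $u \in W^K$ (rather than merely $u \in W_I$) becomes essential. If $s \in J$ satisfied $\ell(u p_- s) < \ell(u p_-)$, then strong exchange applied to the reduced expression of $u p_-$ would delete either a letter from $p_-$ (yielding a shorter representative of the double coset and contradicting minimality of $p_-$) or a letter from $u$ (yielding a reflection $t \in I$ with $p_-^{-1} t p_- = s \in J$, hence $t \in K$, so that $t$ is a right descent of $u$ lying in $K$, contradicting $u \in W^K$). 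Thus the refined control on $u$ as a minimal $W_K$-coset representative is precisely what prevents the formula from collapsing.
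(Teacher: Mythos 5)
The paper does not prove this lemma at all: it simply cites \cite[Theorems~2.7.4 and 2.7.5]{Ca93}. So there is no ``paper's own proof'' to compare against; your task was effectively to reconstruct the proof of Kilmoyer's theorem on parabolic double cosets, and your outline is the standard one. Part (2) is clean and correct: surjectivity via the factorization $W_I = (W^K\cap W_I)\cdot W_K$ together with $W_K\subseteq p_-W_Jp_-^{-1}$, injectivity from part (1) and uniqueness of minimal coset representatives, and length additivity by showing $up_-\in W^J$, which you correctly reduce to the fact that a right $J$-descent of $up_-$ would force $p_-\alpha_s\in\Phi_I^+$ and hence (by the part-(1) observation) yield a right $K$-descent of $u$, contradicting $u\in W^K$.

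Two small imprecisions in part (1) are worth flagging. First, your stated contradiction --- ``produce an element $tp_-s'$ of the double coset strictly shorter than $p_-$'' --- is not quite what happens, since $tp_-s = p_-$ itself. The correct punch line is pure length additivity: with $t=p_-sp_-^{-1}\in W_I$, one has $\ell(t)+\ell(p_-) = \ell(tp_-) = \ell(p_-s)=\ell(p_-)+1$ (using $p_-\in{}^IW$ on the left and $p_-\in W^J$ on the right), forcing $\ell(t)=1$, i.e.\ $p_-\alpha_s$ simple in $\Phi_I$. Second, the inclusion $\Phi_I^+\cap p_-\Phi_J^+\subseteq\Phi_K^+$ does not follow immediately from that observation about simple roots of $J$; you also need a support argument. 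Concretely, if $\gamma=p_-\beta\in\Phi_I^+$ with $\beta=\sum_{s\in J}c_s\alpha_s\in\Phi_J^+$, then since each $p_-\alpha_s$ is a positive root and $\gamma$ has zero coefficient on every simple root outside $I$, positivity of the $c_s$ forces $p_-\alpha_s\in\Phi_I^+$ for every $s$ in the support of $\beta$; the key observation then makes each such $p_-\alpha_s$ a simple root in $K$, so $\gamma$ is supported on $K$ and hence lies in $\Phi_K^+$. (Also, the identity $\Phi(w)=p_-\Phi(v)$ for $w=p_-vp_-^{-1}$ deserves a sentence: it uses $\ell(w)=\ell(v)$, which in turn comes from the two one-sided length additivities for $p_-\in{}^IW\cap W^J$.) With those patches, the argument is complete and matches the standard reference proof.
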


\subsection{} 

Let $\mc{H} = \mc{H}_W$ be the Hecke algebra of $(W,S)$.
This is a $\Qq$-algebra with the standard basis $\{H_w \vert w \in W\}$ satisfying the relations:
\[
\begin{split}
H_v H_w = H_{vw}, \qquad &\text{ if } \ell(vw) = \ell(v) + \ell(w);\\
(H_s +q)(H_s-q^{-1})=0, \qquad &\text{ for } s \in S.
\end{split}
\] 
Let $\bar{\,} : \mH \rightarrow \mH$ be the $\Qq$-semilinear bar involution such that $\overline{H_s}= H_s^{-1}$ and $\overline{q} =q^{-1}$. 

Thanks to \cite[Chap~5]{Lu03}, for any $w \in W$, there is a unique element $\underline{H_w}$ such that
\begin{enumerate}
	\item	$\overline{\underline{H_w}} =\underline{H_w}$;
	\item	$\underline{H_w} = \sum_{y \in W} p_{y,w} H_y$ where
		\begin{itemize}	
			\item	$p_{y,w} = 0$ unless $y \le w$;
			\item	$p_{w,w} =1$;
			\item	$p_{y,w} \in q \Z[q]$ if $y < w$.
		\end{itemize}
\end{enumerate}
The set $\{\underline{H_w} \vert w \in W\}$ forms a $\Qq$-basis of $\mH$. This is the canonical (or Kazhdan-Lusztig) basis of $\mc{H}$.


\subsection{}\label{sub:MJ} 
We then define the parabolic canonical basis, which was originally defined in \cite{Deo87}. Modules of Hecke algebras are usually right modules.

Let $J \subset S$. Let $e_J^+ (H_w) = q^{-\ell(w)}$ be the $1$-dimensional trivial representation of $\mc{H}_J$. 
We define the induced module $M_J  = \text{Ind}_{\text{-}\mc{H}_{J}}^{\text{-}\mc{H}} e^+_J = M_e \cdot \mc{H}$, where the generator $M_e$ satisfies $M_e \cdot H_{w} = q^{-\ell(w)} M_e$ for $w \in W_J$. The module $M_J$ admits a standard basis $\{M_w = M_e \cdot H_w \vert w \in {}^JW\}$. This module admits an $\Qq$-semilinear bar involution compatible with the bar involution on $\mc{H}$, such that
\[
 \overline{M_e} = M_e, \qquad \overline{m \cdot h} =\overline{m} \cdot  \overline{h}, \quad\text{ for } h \in \mc{H}, m \in M_J.
\] 

Hence similar to \cite[Chap. 5]{Lu03}, for any $w \in {}^JW$, there is a unique element $\underline{M_w}$ such that
\begin{enumerate}
	\item	$\overline{\underline{M_w}} =\underline{M_w}$;
	\item	$\underline{M_w} = \sum_{y \in {}^JW} p^+_{y,w} {M_y}$ where
		\begin{itemize}	
			\item	$p^+_{y,w} = 0$ unless $y \le w$;
			\item	$p^+_{w,w} =1$;
			\item	$p^+_{y,w} \in q \Z[q]$ if $y < w$.
		\end{itemize}
\end{enumerate}
The set $\{\underline{M_w} \vert w \in {}^JW\}$ forms the canonical basis of $M_J$.
We actually have the following embedding of $\mc{H}$-modules
\begin{equation}\label{eq:pJ}
\begin{split}
	 p_J^+: M_J &\rightarrow  \mc{H},    \\
	 M_w &\mapsto \underline{H_{w_J}} \cdot H_w, \quad w \in {}^JW,\\
	 \underline{M_w} &\mapsto \underline{H_{w_Jw}}, \quad w \in {}^J W.
 \end{split}
\end{equation}
In a more concrete way, we have
\begin{equation}\label{eq:p+}
\sum_{y\in {}^JW} p^+_{y;w} \underline{H_{w_J}} \cdot H_y = \underline{H_{w_Jw}}.
\end{equation}



\subsection{}

Let $I \subset S$. We are interested in the restriction of $\mc{H}$ and $M_J$ as (right) $\mc{H}_I$-modules. Let us start with the module $\mc{H}$. We have 
\begin{equation}\label{eq:Hp}
	\mc{H} \cong \bigoplus_{p \in W^I} \, H_{p_-} \cdot \mc{H}_I  , \quad \text{ as } \mc{H}_I\text{-modules}. 
\end{equation}

For any $w = p_- w'$ with $p \in W^I$ and $w' \in W_I$, we write $\underline{H^I_{w}} =  H_{p_-} \cdot \underline{H_{w'}}$. It is easy to see that $\{\underline{H^I_{w}} \vert w \in W\}$ forms a basis of $\mc{H}_{W}$. This is the so-called hybrid basis in \cite{GH}.

%
We then turn to the module $M_J$. We have 
\begin{equation}\label{eq:J+I}
	M_J \cong \bigoplus_{p \in {}^JW^I}M_e \cdot H_{p_-} \cdot \mc{H}_I = \bigoplus_{p \in {}^JW^I}M_{p_-} \cdot \mc{H}_I, \text{ as } \mc{H}_I\text{-modules}.
\end{equation}

Now thanks to Lemma~\ref{lem:HK}, we have 
\[
M_{p_-} \cdot \mc{H}_I \cong \text{ind}_{\text{-}\mc{H}_K}^{\text{-}\mc{H}_I} e^+_{K}, \qquad \text{ where } K = K_p =p^{-1}_- J p_- \cap I.
\]
Therefore we have the standard basis $\{M_{p_-w} \vert w \in W_I \cap {}^{K_p}W \}$, and can also define the canonical basis $\{ \underline{M_{{p_-}w}^I} \vert w \in W_I \cap {}^{K_p}W \}$ of each summand $M_{p_-} \cdot \mc{H}_I$ follwing \S\ref{sub:MJ}. Therefore we obtain the (parabolic) hybrid basis $\{\underline{M_{p_-w}^I} \vert w \in W_I \cap {}^{K_p}W,  K_p =p^{-1}_- J p_- \cap I, p \in {}^J W^I\}$ of $M_J$ as an $\mc{H}_I$-module. Recall the embedding $p^+_J : M_J \rightarrow \mc{H} $ in \eqref{eq:pJ}.

\begin{lem}\label{lem:p+}
We fix a double coset $p \in W_J \backslash W / W_I$. Let $K = p^{-1}_- J p_- \cap I$, $K' = p_- K p^{-1}_- = J \cap p_- I p^{-1}_-$, and $w \in  W_I \cap {}^KW $.  We have
\[
p^+_J( \underline{M_{p_-w}^I}) = \sum_{r \in W_J \cap W^{K'}}q^{\ell(w_J)- \ell(rw_{K'}p_{-} w) + \ell(p_-w) }\underline{H^I_{rw_{K'} p_- w}}.
\]
\end{lem}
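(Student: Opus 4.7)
My plan is to trace $\underline{M^I_{p_-w}}$ through $p^+_J$ by first expanding it in the standard basis via parabolic Kazhdan--Lusztig theory for $(W_I, K)$, and then reducing the claim to a Hecke-algebra identity for $\underline{H_{w_J}} \cdot H_{p_-}$.

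Applying \eqref{eq:p+} at the level of $(W_I, K)$ inside $\mc{H}_I$, and denoting by $p^{+,K}_{y,w}$ the corresponding parabolic KL coefficients, we have $\underline{M^I_{p_-w}} = \sum_{y \in W_I \cap {}^KW} p^{+,K}_{y,w} M_{p_-y}$. Since $p_- \in {}^JW^I$ and $y \in W_I \cap {}^KW$, a direct check using \lemref{lem:HK} shows that $p_-y \in {}^JW$ with $\ell(p_-y) = \ell(p_-) + \ell(y)$, so $p^+_J(M_{p_-y}) = \underline{H_{w_J}} \cdot H_{p_-} \cdot H_y$. The lemma thus reduces to the identity
\[
\underline{H_{w_J}} \cdot H_{p_-} = \sum_{r \in W_J \cap W^{K'}} q^{\ell(w_J)-\ell(r)-\ell(w_{K'})} H_{rp_-} \cdot \underline{H_{w_K}}
\]
together with the identification $H_{rp_-} \cdot \underline{H_{w_K w}} = \underline{H^I_{rw_{K'}p_-w}}$.

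For the displayed identity, I would use the classical fact that the longest element $w_J$ of the finite Coxeter system $W_J$ has trivial KL polynomials, giving $\underline{H_{w_J}} = \sum_{r'' \in W_J} q^{\ell(w_J)-\ell(r'')} H_{r''}$. Each $r'' \in W_J$ factors uniquely as $r'' = rt$ with $r \in W_J \cap W^{K'}$, $t \in W_{K'}$, with lengths adding. Using \lemref{lem:HK}(1) with $I$ and $J$ swapped, conjugation by $p_-^{-1}$ is a length-preserving Coxeter isomorphism $W_{K'} \xrightarrow{\sim} W_K$, so $r'' p_- = rp_- t_0$ with $t_0 = p_-^{-1}tp_- \in W_K$, and by \lemref{lem:HK}(2) the lengths add. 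Resumming then gives the identity, since the inner sum $\sum_{t_0 \in W_K} q^{\ell(w_K)-\ell(t_0)} H_{t_0}$ is $\underline{H_{w_K}}$ (using $\ell(w_K) = \ell(w_{K'})$).

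To finish, apply \eqref{eq:p+} once more inside $\mc{H}_I$ for $K \subset I$ to collapse $\sum_y p^{+,K}_{y,w} \underline{H_{w_K}} H_y$ to $\underline{H_{w_K w}}$. The conjugation identity $w_{K'} p_- = p_- w_K$ rewrites $rw_{K'} p_- w = rp_- w_K w$, which is the hybrid decomposition: $rp_- \in W^I$ and $w_K w \in W_I$, with lengths adding. Hence $H_{rp_-} \cdot \underline{H_{w_K w}} = \underline{H^I_{rw_{K'} p_- w}}$, and the length identity $\ell(w_J) - \ell(r) - \ell(w_{K'}) = \ell(w_J) - \ell(rw_{K'} p_- w) + \ell(p_- w)$ completes the proof. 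The main obstacle will be the combinatorial verification that $rp_- \in W^I$ for every $r \in W_J \cap W^{K'}$: one must show $\ell(rp_- t) > \ell(rp_-)$ for every simple reflection $t \in I$. This splits into $t \in K$ (handled by rewriting $p_- t = s' p_-$ with $s' \in K'$ via the conjugation isomorphism and invoking $r \in W^{K'}$) and $t \in I \setminus K$ (where \lemref{lem:HK}(2) forces $p_- t$ to remain a minimal-length representative). Keeping track of the compatible length additions across these cases is where the bookkeeping requires care.
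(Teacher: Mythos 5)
Your argument is correct and follows essentially the same route as the paper's proof: expand $\underline{M^I_{p_-w}}$ in the standard basis via the parabolic coefficients $p^{+,K}_{y,w}$, push forward through $p_J^+$, rewrite $\underline{H_{w_J}}\cdot H_{p_-}$ using the factorization of $W_J$ along $W_{K'}$ together with the conjugation $\underline{H_{w_{K'}}}\cdot H_{p_-}=H_{p_-}\cdot\underline{H_{w_K}}$, then collapse the inner sum via \eqref{eq:p+} and identify the result as a hybrid basis element. The only minor inefficiency is your two-case check that $rp_-\in W^I$: this follows uniformly for all $s\in I$ from \lemref{lem:HK}(2), since $rp_-s$ already sits in the canonical $(u,v)=(r,s)$ form with $\ell(rp_-s)=\ell(r)+\ell(p_-)+1>\ell(rp_-)$, so the $s\in K$ versus $s\in I\setminus K$ split is not needed.
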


\begin{proof}
Recall that we have 
\begin{align*}
\underline{H_{w_J}} = \sum_{w \in W_J} q^{\ell(w_J) - \ell(w)} H_w=  \sum_{r \in W_J \cap W^{K'}} q^{\ell(w_J)- \ell(w_{K'}) - \ell(r) } H_{r} \cdot \underline{H_{w_{K'}}}.
\end{align*}
On the other hand, thanks to Lemma~\ref{lem:HK} we have 
\[
 \underline{H_{w_{K'}}} \cdot H_{p_-} = H_{p_-} \cdot \underline{H_{w_{K}}}.
\]

Hence we have 
\[
\underline{H_{w_J}} \cdot H_{p_-} =  \sum_{r \in W_J \cap W^{K'}} q^{\ell(w_J)- \ell(w_{K}) - \ell(r) } H_{r} \cdot H_{p_-} \cdot \underline{H_{w_{K}}}.
\]

Now we can write 
\begin{align*}
p^+_J(\underline{M_{p_-w}^I}) &= p^+_J( \sum_{y \in W_I \cap {}^KW} p^+_{y;w}M_{p_-} \cdot H_y)\\
&= \sum_{y \in W_I \cap {}^KW} p^+_{y;w} \underline{H_{w_J}} \cdot H_{p_-} \cdot H_y \\
&= \sum_{y \in W_I \cap {}^KW} p^+_{y;w} \Big( \sum_{r \in W_J \cap W^{K'}} q^{\ell(w_J)- \ell(w_{K}) - \ell(r) } H_{r} \cdot H_{p_-} \cdot \underline{H_{w_{K}}} \cdot H_y \Big) \\
&= \sum_{r \in W_J \cap W^{K'}}q^{\ell(w_J)- \ell(w_{K}) - \ell(r) } H_{r} \cdot H_{p_-} \cdot \Big( \sum_{y \in W_I \cap {}^KW} p^+_{y;w}   \cdot \underline{H_{w_{K}}} \cdot H_y \Big)\\
 &\stackrel{\heartsuit}{=}\sum_{r \in W_J \cap W^{K'}}q^{\ell(w_J)- \ell(w_{K}) - \ell(r) } H_{r p_-}  \cdot \underline{H_{w_K w}}\\
&=\sum_{r \in W_J \cap W^{K'}}q^{\ell(w_J)- \ell(w_{K'}) - \ell(r) }\underline{H^I_{rw_{K'} p_- w}}\\
&=\sum_{r \in W_J \cap W^{K'}}q^{\ell(w_J)- \ell(rw_{K'}p_{-} w) + \ell(p_-w) }\underline{H^I_{rw_{K'} p_- w}}.
\end{align*}
The identity $(\heartsuit)$ follows from \eqref{eq:p+}. This finishes the proof. 
\end{proof}


\subsection{} The following proposition is the key for the positivity results in this paper. 
\begin{prop}\cite{GH, Bra03}\label{prop:GH}
We have, as an equation in $\mc{H}$,
\[
	\underline{H_{w}}  = \sum_{y \le w}p^I_{y ; w}  \underline{H^I_{y}}, \quad p^I_{y ; w} \in \N[q].
\]

\end{prop}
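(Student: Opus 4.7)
The plan is to argue by induction on $\ell(w)$, exploiting the compatibility of the hybrid basis with the right $\mc{H}_I$-module decomposition $\mc{H} = \bigoplus_{p \in W^I} H_{p_-} \cdot \mc{H}_I$ from~\eqref{eq:Hp}. The base case $w = e$ is immediate since $\underline{H_e} = H_e = \underline{H^I_e}$, so $p^I_{e;e} = 1 \in \N[q]$.

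For the inductive step, I would first suppose that $w$ admits a right descent $s \in I$ (i.e.\ $\ell(ws) < \ell(w)$). The standard Kazhdan--Lusztig recursion reads
\[
\underline{H_w} \; = \; \underline{H_{ws}} \cdot \underline{H_s} \; - \sum_{z < ws,\, zs < z} \mu(z, ws)\, \underline{H_z}.
\]
Right multiplication by $\underline{H_s}$ preserves each summand $H_{p_-} \cdot \mc{H}_I$ of~\eqref{eq:Hp} and acts on it as right multiplication by a Kazhdan--Lusztig generator of $\mc{H}_I$, whose structure constants lie in $\N[q, q^{-1}]$ (Kazhdan--Lusztig positivity for $\mc{H}_I$, geometric in type A and Elias--Williamson in general). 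Hence each $\underline{H^I_y} \cdot \underline{H_s}$ expands non-negatively in the hybrid basis. Combined with the inductive hypothesis applied to $\underline{H_{ws}}$ and to each $\underline{H_z}$ in the correction sum, this rewrites the right-hand side as a signed $\Z[q]$-combination of hybrid basis elements.

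The main obstacle is the subtraction of the $\mu$-corrections: a naive signed combination threatens positivity, and this is essentially the same subtlety that makes Kazhdan--Lusztig positivity itself non-trivial. The resolution in \cite{GH, Bra03} is a $W$-graph / leading-coefficient argument showing that the positive hybrid expansion of $\underline{H_{ws}} \cdot \underline{H_s}$ splits \emph{exactly} into the desired $\underline{H_w}$-part and the $\mu$-corrections, so that the cancellation occurs term by term. An alternative route would be a geometric lift: interpret each $\underline{H^I_y}$ as the class of a sheaf on $G/B$ adapted to the projection $\pi : G/B \to G/P_I$, namely the $*$-pushforward of a standard sheaf on a Schubert variety in $G/P_I$ twisted by the IC sheaf of the fiber Schubert variety; then $p^I_{y;w}$ becomes a stalk Poincar\'e polynomial of the IC sheaf of the Schubert variety $X_w$, which is manifestly in $\N[q]$.

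The remaining case, when $w$ admits no right descent in $I$ (so $w \in W^I$ and $\underline{H^I_w} = H_w$), cannot be accessed by right multiplication inside $\mc{H}_I$ and must be treated separately. In that case the statement reduces, via the factorization in Lemma~\ref{lem:HK}, to regrouping the ordinary Kazhdan--Lusztig polynomials $p_{y,w}$ along $W_I$-cosets indexed by $W^I$, whose positivity is again supplied by the same geometric/categorical input.
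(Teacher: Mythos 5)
The paper does not prove Proposition~\ref{prop:GH}; it is cited from \cite{GH, Bra03} and used as a black box, and the only indication of how the cited proof goes is the remark after Proposition~\ref{prop:MJ}, which points to ``hyperbolic localization on partial flag varieties.'' So there is no in-paper proof to compare against, and I can only assess whether your sketch is a viable independent route. The primary plan --- induction on $\ell(w)$ via the Kazhdan--Lusztig recursion, multiplying on the right by $\underline{H_s}$ with $s\in I$ --- does not close. You correctly identify that the subtraction of the $\mu$-correction terms is the obstacle, but the proposed fix (``a $W$-graph / leading-coefficient argument showing that the cancellation occurs term by term'') is not what \cite{GH} or \cite{Bra03} do, and I do not see how to supply such an argument: this is exactly the step at which ordinary Kazhdan--Lusztig positivity itself stops being a combinatorial induction and becomes a geometric (or Soergel-theoretic) theorem. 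The hybrid version is no easier, so the induction carries no content beyond the external positivity input it would have to invoke anyway. Note also that every $w$ with $w\notin W^I$ does have a right descent in $I$, so your case split is exhaustive, but that does not rescue the argument.

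Your ``alternative geometric route'' is much closer to what the citations actually do, but the details are off. The hybrid basis element $\underline{H^I_y}$ does correspond to an object that is ``IC in the $W_I$-fiber direction and standard in the $W^I$-base direction,'' and the coefficients $p^I_{y;w}$ are non-negative because they are Poincar\'e polynomials of Braden's hyperbolic localization of $\mathrm{IC}(X_w)$ along a $\mathbb{C}^*$-action adapted to $G/P_I$; hyperbolic localization of a pure complex remains pointwise pure, which is the content of \cite{Bra03}. However, $p^I_{y;w}$ is not a ``stalk Poincar\'e polynomial of the IC sheaf of $X_w$'' --- stalks recover the ordinary $p_{y,w}$, i.e.\ the expansion in the standard basis $H_y$, not the hybrid one. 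Likewise, the case $w\in W^I$ is not a mere ``regrouping'' of ordinary KL polynomials along cosets: passing from the standard basis to the hybrid basis inside each summand $H_{p_-}\cdot\mc{H}_I$ requires inverting the KL change-of-basis in $\mc{H}_I$, whose entries are signed, and already in rank $2$ one sees genuine cancellation before the coefficients come out non-negative. If you drop the induction and build the argument directly around Braden's purity theorem, you would have the intended proof.
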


\begin{prop}\label{prop:MJ}
Let $ w \in {}^JW$. We have the following positivity result: 
\[
\underline{M_w} = \sum_{y \in {}^JW } p^{I, +}_{y,w} \underline{M^I_y} \in M_J, \qquad  \text{with }p^{I, +}_{y,w} \in \N[q].
\]
\end{prop}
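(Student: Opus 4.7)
The plan is to apply the embedding $p^+_J \colon M_J \hookrightarrow \mc{H}$ and compare two expansions of $p^+_J(\underline{M_w}) = \underline{H_{w_J w}}$ in the hybrid basis $\{\underline{H^I_z}\}$ of $\mc{H}$: the one coming directly from \propref{prop:GH}, which already has coefficients in $\N[q]$, and the one obtained by first writing $\underline{M_w} = \sum_{y \in {}^JW} p^{I,+}_{y,w}\, \underline{M^I_y}$ and then expanding each $p^+_J(\underline{M^I_y})$ via \lemref{lem:p+}. Matching coefficients of a distinguished hybrid basis element on the two sides will identify $p^{I,+}_{y,w}$ with one of the polynomials from \propref{prop:GH}.

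The first step is to put \lemref{lem:p+} in a normalized form. Write $y = p_- w \in {}^JW$ and let $w_J^{K'}$ denote the longest element of $W_J \cap W^{K'}$, so that $w_J = w_J^{K'} w_{K'}$ with lengths additive. From the additivity in \lemref{lem:HK}(2) one finds $\ell(rw_{K'} p_- w) = \ell(r) + \ell(w_{K'}) + \ell(p_-) + \ell(w)$, so the exponent in \lemref{lem:p+} collapses to $\ell(w_J^{K'}) - \ell(r)$. Thus
\[
p^+_J(\underline{M^I_y}) \,=\, \underline{H^I_{w_J y}} \;+\; \sum_{\substack{r \in W_J \cap W^{K'}\\ r \ne w_J^{K'}}} q^{\ell(w_J^{K'}) - \ell(r)}\, \underline{H^I_{rw_{K'} p_- w}},
\]
where the term at $r = w_J^{K'}$ has coefficient $1$ and every remaining term has coefficient in $q\N[q]$.

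The crux is then a triangularity observation: for any $y \in {}^JW$, the element $w_J y$ is the \emph{unique} longest representative of the right $W_J$-coset $W_J y \subset W$. Hence $\underline{H^I_{w_J y}}$ can only appear in $p^+_J(\underline{M^I_{y'}})$ when $W_J y' = W_J y$, which, since ${}^JW$ parametrizes right cosets by their minimal representatives, forces $y' = y$; and within the expansion of $p^+_J(\underline{M^I_y})$ itself, maximality of $\ell(w_J y)$ among the lengths $\ell(rw_{K'} p_- w)$ forces $r = w_J^{K'}$. Thus $\underline{H^I_{w_J y}}$ appears in exactly one $p^+_J(\underline{M^I_{y'}})$, namely at $y' = y$, and with coefficient $1$.

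Finally, applying $p^+_J$ to $\underline{M_w} = \sum_y p^{I,+}_{y,w}\, \underline{M^I_y}$ and reading off the coefficient of $\underline{H^I_{w_J y}}$ yields $p^{I,+}_{y,w}$ on the right-hand side and $p^I_{w_J y,\, w_J w} \in \N[q]$ on the left (by \propref{prop:GH}). This forces $p^{I,+}_{y,w} = p^I_{w_J y,\, w_J w} \in \N[q]$, and incidentally identifies these parabolic coefficients with honest parabolic Kazhdan--Lusztig-type polynomials. The only real technical overhead is the bookkeeping with the two conjugate parabolics $K = p^{-1}_- J p_- \cap I$ and $K' = p_- K p^{-1}_- = J \cap p_- I p^{-1}_-$ and with the various length additivity statements; the triangularity itself is immediate once the normalization of \lemref{lem:p+} is in place.
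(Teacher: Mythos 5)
Your proof is correct and takes essentially the same route as the paper: apply the embedding $p^+_J$, expand both sides in the hybrid basis via Lemma~\ref{lem:p+} and Proposition~\ref{prop:GH}, and match coefficients. The only difference is that you normalize by choosing $r = w_J^{K'}$, which pins down the clean identity $p^{I,+}_{y,w} = p^I_{w_J y,\, w_J w}$; the paper's proof reaches the same conclusion by comparing coefficients for an arbitrary $r$, which is precisely the ``up to a positive scalar'' identification promised in its preceding remark.
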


\begin{rem}
The proposition can also be proved similar to Proposition~\ref{prop:GH} via hyperbolic localization on partial flag varieties following \cite{GH, Bra03}. 

We give another proof identifying $p^{I, +}_{y,w}$ with $p^I_{y', w'}$ up to a ``positive" scalar for certain $y'$, $w' \in W$. This should be considered as an analog of \cite[Proposition~3.4]{Deo87}.
\end{rem}

\begin{proof}
We compare the images of both sides under the embedding \eqref{eq:pJ}. 

For the left hand side, we have
\[
p^+_J(\underline{M_{w}}) = \underline{H_{w_Jw}}= \sum_{y \in W} p^I_{y, w_Jw} \underline{H^I_{y}}.
\]

Thanks to Lemma~\ref{lem:p+}, we have 
\begin{align*}
&p^+_{J}(  \sum_{y \in {}^JW } p^{I, +}_{y,w} \underline{M^I_y} ) \\
= &p^+_{J}\Big( \sum_{p \in W_J \backslash W / W_I} \Big( \sum_{y \in {}^JW, y \in p } p^{I, +}_{y,w} \underline{M^I_y} \Big) \Big) \\
=& \sum_{p \in W_J \backslash W / W_I} \Big( \sum_{y \in {}^JW, y \in p } p^{I, +}_{y,w}\Big( \sum_{r \in W_J \cap W^{K_p'}}q^{\ell(w_J)- \ell(rw_{K_p'}y) + \ell(y) }\underline{H^I_{rw_{K_p'} y}} \Big)\Big),
\end{align*}
where $K'_p = J \cap p_- I p^{-1}_-$.

Then by comparing the coefficients, we have 
\[
q^{\ell(rw_{K'_p} y)}p^{I}_{rw_{K'_p} y, w_Jw} = q^{\ell(w_J) + \ell(y)} p^{I, +}_{y,w} , \quad \text{ for } {y \in p \in W_J \backslash W / W_I}.
\]
The proposition follows from the positivity in Proposition~\ref{prop:GH}. 
\end{proof}
\section{Based modules of quantum groups}
In this section, we review based modules of quantum groups. The main reference is \cite[Chap.~27]{Lu94}. We denote the standard generators of $\U = \U_q(\mathfrak{sl}_{n+1})$ by $E_i, F_i, K_i^{\pm 1}$ for $i =1, 2, \dots, n$, and denote the anti-linear ($q \mapsto q^{-1}$) bar involution on $\U$ by $\psi$ (denoted by $\,\bar{\,}\,$ in \cite[\S3.1.12]{Lu94}). 

We use the following convention on the coproduct of $\U$: 
\begin{align*} 
\begin{split}
\Delta:  &\U \longrightarrow \U \otimes \U,
 \\
 \Delta (E_{i}) &= 1 \otimes E_{i} + E_{i} \otimes K^{-1}_{i}, \\
 \Delta (F_{i}) &= F_{i} \otimes 1 +  K_{i} \otimes F_{i},\\
 \Delta (K_{i}) &= K_{i} \otimes K_{i}.
 \end{split}
\end{align*}
This is the same as the one used in \cite{BW13}, but is different from the one used in \cite{Lu94}.

\subsection{}
Let $(M,B)$ be a finite-dimensional based $\U$-module (\cite[\S27.1.2]{Lu94}). We shall denote the associated bar involution on $M$ by $\psi$. We shall abuse the notation and denote by $\psi$ the bar involution on any based $\U$-module. Elements of $B$ are $\psi$-invariant by definition. We have the following compatibility of bar involutions:
\[
\psi (u x) = \psi(u) \psi(x), \qquad u \in \U, x \in M.
\]
Let $(M\rq{},B\rq{})$ be another based $\U$-module. A based morphism from a based module $(M,B)$ to a based module $(M\rq{},B\rq{})$ is by definition a morphism $f : M \rightarrow M\rq{}$ of $\U$-modules such that 
\begin{enumerate}
	\item 	for any $b \in B$, we have $f(b) \in B\rq{} \cup \{0\}$;
	\item 	$B \cap \ker f$ is a basis of $\ker f$.
\end{enumerate}
As a consequence, we see that $f \circ \psi = \psi \circ f$. We also have the obvious notions of based submodules and based quotients.

\begin{lem}\cite[Theorem~27.3.2]{Lu94}\label{lem:1}
The tensor product $M \otimes M\rq{}$ is a based $\U$-module with the basis $B \diamondsuit B\rq{} = \{b \diamondsuit b\rq{} \vert (b, b\rq{}) \in B \times B\rq{}\}$, where $b \diamondsuit b\rq{}$ is the unique $\psi$-invariant element of the form
\begin{equation}\label{eq:1}
b\rq{} \otimes b\rq{} + \sum_{(b_1 , b_1\rq{}) \in B \times B\rq{}} q \Z[q] b_1 \otimes b_1\rq{}.
\end{equation}

Here the bar involution $\psi$ on $M_1 \otimes M_2$ is defined as $\psi= \Theta \circ (\psi \otimes \psi)$, where $\Theta$ denotes the quasi-$\mc{R}$ matrix (\cite[\S4.1]{Lu94}). 
\end{lem}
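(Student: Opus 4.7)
The plan is to deduce this from the general existence/uniqueness lemma for bar-invariant bases (Lusztig's lemma, \cite[24.2.1]{Lu94}), after verifying the two inputs it requires: (i) that $\psi = \Theta \circ (\psi \otimes \psi)$ is a well-defined involution on $M \otimes M'$ compatible with the $\U$-action, and (ii) that on the tensor basis $\{b \otimes b'\}$ this involution is uni-triangular with respect to a suitable partial order, with off-diagonal entries in $\mathbb{Z}[q,q^{-1}]$.

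First I would recall the key properties of the quasi-$\mathcal{R}$-matrix $\Theta \in \widehat{\U^- \otimes \U^+}$: it satisfies $\Theta = \sum_\mu \Theta_\mu$ with $\Theta_\mu \in \U^-_{-\mu} \otimes \U^+_\mu$, $\Theta_0 = 1 \otimes 1$, and the intertwining identity $\Delta(\psi(u)) \Theta = \Theta (\psi \otimes \psi)(\Delta(u))$ for all $u \in \U$, together with $(\psi \otimes \psi)(\Theta) \cdot \Theta = 1$. The first identity gives $\psi$-equivariance of the tensor product involution with respect to the $\U$-action, and the second gives $\psi^2 = \id$ on $M \otimes M'$. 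Since $M$ and $M'$ are finite-dimensional, only finitely many $\Theta_\mu$ act nontrivially, so $\psi$ is well-defined.

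Next I would unpack what $\psi(b \otimes b')$ looks like in terms of the tensor basis. Writing $\psi(b \otimes b') = (\psi \otimes \psi) \bigl( (\psi \otimes \psi)(\Theta) \cdot (b \otimes b') \bigr)$ and using $\Theta_0 = 1 \otimes 1$ together with the fact that the higher $\Theta_\mu$ ($\mu \neq 0$) raise the weight in the first tensor factor while $F_i$ and $E_i$ act on $B$ with coefficients in $\mathbb{Z}[q,q^{-1}]$ (since $B$ and $B'$ are bases of based modules), I would conclude that
\[
\psi(b \otimes b') = b \otimes b' + \sum_{(b_1, b_1') \neq (b,b')} r_{(b,b'),(b_1,b_1')}\, b_1 \otimes b_1', \qquad r \in \mathbb{Z}[q,q^{-1}],
\]
and that the sum is triangular with respect to the partial order where $(b_1,b_1') \prec (b,b')$ iff $\mathrm{wt}(b_1) > \mathrm{wt}(b)$ (strictly, in the usual dominance on the weight lattice), with equality of weights forcing $(b_1,b_1') = (b,b')$. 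This triangularity is the crucial point and is where the weight-grading on $\Theta$ does all the work.

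With these two inputs, Lusztig's lemma applies verbatim: there is a unique $\psi$-invariant element of the form $b \otimes b' + \sum_{(b_1,b_1')} q\mathbb{Z}[q]\, b_1 \otimes b_1'$, which we call $b \diamondsuit b'$, and these elements form a $\mathbb{Q}(q)$-basis of $M \otimes M'$. The part of the argument I expect to take the most care is verifying the triangularity in the correct partial order and confirming that the coefficients $r$ genuinely lie in $\mathbb{Z}[q,q^{-1}]$ rather than only $\mathbb{Q}(q)$; the former requires using the integrality of $\Theta$ over the Lusztig $\mathcal{A}$-form and the fact that $E_i,F_i$ act on the based modules $(M,B)$ and $(M',B')$ with coefficients in $\mathbb{Z}[q,q^{-1}]$, both of which are standard consequences of the definition of based module in \cite[\S27.1.2]{Lu94}.
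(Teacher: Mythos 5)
This lemma is not proved in the paper at all; it is quoted directly from \cite[Theorem~27.3.2]{Lu94}, so there is no ``paper's own proof'' to compare against. Your sketch reconstructs the core existence argument, and the two inputs you identify (the intertwining property $\Delta(\psi(u))\Theta = \Theta(\psi\otimes\psi)\Delta(u)$ and the identity $(\psi\otimes\psi)(\Theta)\cdot\Theta = 1\otimes 1$, both from \cite[\S4.1]{Lu94}) are indeed exactly what one uses to see that $\psi = \Theta\circ(\psi\otimes\psi)$ is a $\U$-equivariant anti-linear involution, and the weight grading $\Theta_\mu \in \U^-_{-\mu}\otimes\U^+_\mu$ is what yields triangularity.

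Two points deserve flagging. First, a sign/convention issue: with Lusztig's normalization $\Theta_\mu \in \U^-_{-\mu}\otimes\U^+_\mu$ and $\mu>0$, the nonzero off-diagonal terms $b_1\otimes b_1'$ of $\psi(b\otimes b')-b\otimes b'$ satisfy $\mathrm{wt}(b_1) = \mathrm{wt}(b)-\mu < \mathrm{wt}(b)$, so the first tensor factor's weight is \emph{lowered}, not raised. You wrote the opposite. This is harmless for invoking \cite[Lemma~24.2.1]{Lu94} (any consistent partial order works, and the paper's coproduct differs from Lusztig's, so the direction should be rechecked against \cite{BW13}'s conventions for $\Theta$), but as stated your order goes the wrong way in Lusztig's setup. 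Second, and more substantively: what you establish by appealing to Lusztig's Lemma~24.2.1 is only the existence and uniqueness of the $\psi$-invariant basis $B\diamondsuit B'$. The actual content of Theorem~27.3.2 is that $(M\otimes M', B\diamondsuit B')$ is a \emph{based module} in the sense of \cite[\S27.1.2]{Lu94}, which also requires that the $\mathcal{A}$-lattice $\mathcal{A}(B\diamondsuit B')$ be stable under the integral form of $\dot\U$ and that $B\diamondsuit B'$ induce a crystal base at $q^{-1}\to 0$. Lusztig verifies these additional axioms through the cell filtration machinery of \cite[\S27.2]{Lu94} and reduction to tensor products of simple modules; your sketch does not address them, so it would need to be supplemented before it constitutes a proof of the full statement.
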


\begin{rem}
Note that while it is crucial to consider the integral lattice $\Z[q,q^{-1}]$ in the theory of based modules, it plays little role (after the establishment of various canonical bases) for the question we are interested in in this paper. So we shall not make this explicit.
\end{rem}

\begin{lem}\label{lem:tensor}Let $(M_i, B_i)$ be finite-dimensional based $\U$-modules for $i = 1, 2,3,4$. Let $f: M_1 \rightarrow M_3$ and $g: M_2 \rightarrow M_4$ be morphisms of based $\U$-modules. 
Then the morphism $f \otimes g: M_1 \otimes M_2 \rightarrow M_3 \otimes M_4$ is a based morphism of $\U$-modules. 
\end{lem}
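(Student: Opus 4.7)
The plan is to apply the uniqueness characterisation of the $\diamondsuit$-basis in \lemref{lem:1} to identify $(f\otimes g)(b_1\diamondsuit b_2)$ either with $f(b_1)\diamondsuit g(b_2)$ or with $0$. The preliminary observation is that $f\otimes g$ commutes with the tensor-product bar involution $\psi=\Theta\circ(\psi\otimes\psi)$: $f$ and $g$ individually commute with $\psi$ (as recorded in the text preceding \lemref{lem:1}), so $\psi\otimes\psi$ commutes with $f\otimes g$; and since $f\otimes g$ is a morphism of $\U\otimes\U$-modules, each homogeneous component of $\Theta$ commutes with it as well.

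Fix $b_1\in B_1$ and $b_2\in B_2$. If $f(b_1)=b_1'\in B_3$ and $g(b_2)=b_2'\in B_4$, then expanding $b_1\diamondsuit b_2$ as in \eqref{eq:1} and applying $f\otimes g$ (which sends each summand $c_1\otimes c_2$ either to $0$ or to an element of $B_3\otimes B_4$) produces a $\psi$-invariant element of the form $b_1'\otimes b_2'+\sum q\Z[q]\cdot B_3\otimes B_4$, and the uniqueness in \lemref{lem:1} forces it to equal $b_1'\diamondsuit b_2'$.

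The nontrivial case is $f(b_1)=0$ (the case $g(b_2)=0$ is symmetric). I would argue that $b_1\diamondsuit b_2\in\ker f\otimes M_2$, from which $(f\otimes g)(b_1\diamondsuit b_2)=0$ follows. Since $\ker f$ is a $\U$-submodule of $M_1$ and the quasi-$\mc{R}$-matrix $\Theta$ lives in a completion of $\U^-\otimes\U^+$ (\cite[\S4.1]{Lu94}), we have $\Theta(\ker f\otimes M_2)\subseteq \ker f\otimes M_2$, and so the bar involution on $M_1\otimes M_2$ restricts to the bar involution on $\ker f\otimes M_2$ arising from the based modules $(\ker f, B_1\cap\ker f)$ and $(M_2, B_2)$. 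Applying \lemref{lem:1} inside $\ker f\otimes M_2$ therefore constructs a $\psi$-invariant element of the form $b_1\otimes b_2+\sum q\Z[q]\cdot((B_1\cap\ker f)\otimes B_2)$, which by uniqueness in $M_1\otimes M_2$ must coincide with $b_1\diamondsuit b_2$.

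Once the action of $f\otimes g$ on the basis $B_1\diamondsuit B_2$ is pinned down, the kernel condition is automatic. The nonzero images $f(b_1)\diamondsuit g(b_2)$ are distinct elements of $B_3\diamondsuit B_4$, since a based morphism is injective on the basis outside its kernel-basis (a difference of two such basis elements with equal image would lie in $\ker f$ while having an expansion not supported on $B_1\cap\ker f$, contradicting the based submodule property). Hence $(B_1\diamondsuit B_2)\cap\ker(f\otimes g)$ consists exactly of those $b_1\diamondsuit b_2$ with $b_1\in B_1\cap\ker f$ or $b_2\in B_2\cap\ker g$; by the previous paragraph this set is the union of the canonical bases of $\ker f\otimes M_2$ and $M_1\otimes\ker g$, hence a basis of $\ker f\otimes M_2+M_1\otimes\ker g=\ker(f\otimes g)$. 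The main obstacle is the ``zero'' case, which hinges on the $\U^-\otimes\U^+$-form of $\Theta$ ensuring $\Theta(\ker f\otimes M_2)\subseteq \ker f\otimes M_2$; once this is secured, everything reduces to the uniqueness statement in \lemref{lem:1}.
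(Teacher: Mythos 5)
Your proof is correct, and the treatment of the nonzero case (both $f(b_1)\neq 0$ and $g(b_2)\neq 0$) is essentially the paper's argument. Where you diverge is the zero case: the paper observes directly that $(f\otimes g)(b_1\diamondsuit b_2)$ is a $\psi$-invariant element lying in the $q\Z[q]$-span of $B_3\otimes B_4$ (its leading term $f(b_1)\otimes g(b_2)$ having vanished), and hence must be zero by the uniqueness underlying \lemref{lem:1}. You instead prove the stronger, more structural statement that $b_1\diamondsuit b_2$ itself lies in $\ker f\otimes M_2$, by checking that the quasi-$\mathcal{R}$-matrix preserves this subspace and then invoking uniqueness twice (once inside $\ker f\otimes M_2$, once in the ambient module). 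Both are sound; yours is a bit longer but makes explicit that $\ker f\otimes M_2$ is a based submodule of $M_1\otimes M_2$, and also spells out the kernel-basis condition that the paper dispatches with ``it follows immediately.'' One small remark: the $\U^-\otimes\U^+$ form of $\Theta$ is not actually needed for the stability $\Theta(\ker f\otimes M_2)\subseteq\ker f\otimes M_2$ --- since $\ker f$ is a $\U$-submodule, any element of (the completion of) $\U\otimes\U$ would do, which is all the paper uses.
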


\begin{proof} 
It is clear that $f \otimes g$ is a morphism of $\U$-modules. We prove it is based.

Since $f$ and $g$ are both based morphisms, we know that $(f \otimes g) \circ (\psi \otimes \psi) = (\psi \otimes \psi) \circ (f \otimes g)$. Since $\Theta$ is in the completion of $\U\otimes \U$ (\cite[\S4.1]{Lu94}), we see that $f \otimes g$ commutes with $\Theta$ automatically. Hence $f \otimes g$ commutes with the bar involutions $\psi =  \Theta \circ (\psi \otimes \psi)$. 

Let $b_1 \diamondsuit b_2 \in B_1 \diamondsuit B_2$. We have 
\[
\psi\Big((f \otimes g)( b_1 \diamondsuit b_2)\Big) = (f \otimes g) \Big( \psi(b_1 \diamondsuit b_2)\Big) = (f \otimes g)( b_1 \diamondsuit b_2).
\] 

On the other hand, $(f \otimes g)( b_1 \diamondsuit b_2)$ is of the form (thanks to \eqref{eq:1}) 
\[
f(b_1) \otimes g(b_2) + \sum_{(b_1\rq{} , b_2\rq{}) \in B_1 \times B_2} q \Z[q] f(b_1\rq{}) \otimes g(b_2\rq{}),
\]
where $f(b_1), f(b_1\rq{}) \in B_3 \cup \{0\}$ and $g(b_2), g(b_2\rq{}) \in B_4  \cup \{0\}$, since $f$ and $g$ are based.

Now if neither $f(b_1)$ nor $g(b_2)$ are zero, thanks to Lemma~\ref{lem:1}, we see that $(f \otimes g)( b_1 \diamondsuit b_2) = f(b_1) \diamondsuit g(b_2) \in B_3 \diamondsuit B_4$.

 if $f(b_1)$ or $g(b_2)$ is zero, $(f \otimes g)( b_1 \diamondsuit b_2)$ is a $\psi$-invariant elment of the form  
 \[\sum_{(b_1\rq{} , b_2\rq{}) \in B_1 \times B_2} q \Z[q] f(b_1\rq{}) \otimes g(b_2\rq{}),\] hence has to be zero. Therefore we have
\[
(f \otimes g) (b_1 \diamondsuit b_2) =  
\begin{cases}f(b_1) \diamondsuit g(b_2), &\text{if } (f(b_1), g(b_2)) \in B_3 \times B_4;\\
0, &\text{otherwise}.
\end{cases}
\]
It follows immediately that $f \otimes g$ is based.
\end{proof}



\subsection{}\label{sec:A} Let $\I = \{1, 2, \dots, n+1\}$ be the index set.
Let $\VV$ be the natural representation of $\U$. Then $\VV$ is a based $\U$-module with the basis $B = \{v_i \vert i \in \I\}$ being the standard basis as well as the canonical basis. The tensor product $\VV^{\otimes m}$ admits the standard basis $\{v_f = v_{f(1)} \otimes \cdots \otimes v_{f(m)} \vert f \in \I^m\}$. It is also a based module with basis $B^{\diamondsuit m}$ thanks to Lemma~\ref{lem:1}. We call an element $f \in \I^m$ anti-dominant, if $f(1) \le f(2) \le \cdots \le f(m)$. 

Let $1 \le m \le n$. Let $\mc{H}_{A_{m-1}}$ be the Hecke algebra of the Weyl group $W_{A_{m-1}}$, where we denote the set of simple reflections of $W_{A_{m-1}}$ by $S$. The Hecke algebra  $\mc{H}_{A_{m-1}}$ and the Weyl group $W_{A_{m-1}}$ both act (from the right) naturally on $\VV^{\otimes m}$ (c.f. \cite[\S5.1]{BW13}).  We have the following identification of $\mc{H}_{A_{m-1}}$-modules,
\begin{align*}
\VV^{\otimes m} =&  \bigoplus_{f \in \I^m} v_f \cdot \mc{H}_{A_{m-1}} \cong \bigoplus_{ f \in \I^m }\underline{H_{w_{J(f)} }} \cdot \mc{H}_{A_{m-1}}
 \cong   \bigoplus_{ f \in \I^m}  M_{J(f)},  \text{ for anti-dominant } f  \in \I^m,
\end{align*}
where $J(f) \subset S$ such that $W_{J(f)} \subset W_{A_{m-1}}$ is the stablizer of $v_f$.  

\begin{prop}\cite{FKK98}\label{prop:A}
The basis  $B^{\diamondsuit m}$ of $\VV^{\otimes m}$ can be identified with the (parabolic) Kazhdan-Lusztig basis associated with the Hecke algebra $\mc{H}_{A_{m-1}}$ via the  $q$-Schur duality. 
\end{prop}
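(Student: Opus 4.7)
The plan is to match summand by summand the decomposition $\VV^{\otimes m} = \bigoplus_{f \text{ anti-dominant}} v_f \cdot \mc{H}_{A_{m-1}}$ with the parabolic modules $M_{J(f)}$, and then invoke uniqueness of the parabolic Kazhdan--Lusztig basis from \S\ref{sub:MJ} to identify the elements of $B^{\diamondsuit m}$ lying in each summand with the corresponding canonical basis $\{\underline{M_w}\}$.

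First I would verify that for each anti-dominant $f$ the tensor $v_f$ is already a canonical basis element, namely $v_{f(1)} \diamondsuit \cdots \diamondsuit v_{f(m)} = v_f$. The point is that the quasi-$\mc{R}$-matrix $\Theta$ produces no lower-order correction terms in \eqref{eq:1} when acting on a tensor whose weights are arranged anti-dominantly, so $v_f$ is automatically $\psi$-invariant. Under the $\mc{H}_{A_{m-1}}$-module isomorphism $v_f \cdot \mc{H}_{A_{m-1}} \cong M_{J(f)}$, this $v_f$ corresponds to the generator $M_e$, and each standard basis vector $v_g$ in this summand corresponds, by Lemma~\ref{lem:HK} applied inside $W_{A_{m-1}}$, to a scalar multiple of $M_w$ for a unique $w \in {}^{J(f)}W_{A_{m-1}}$.

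Next I would transport the bar involution. With $\psi(v_f) = v_f$ in hand, I need $\psi(v_f \cdot h) = v_f \cdot \overline{h}$ for all $h \in \mc{H}_{A_{m-1}}$. By multiplicativity this reduces to checking the identity on each generator $H_s$, which is a direct computation from the explicit $R$-matrix formulae on $\VV \otimes \VV$ used to define the $\mc{H}_{A_{m-1}}$-action. Once this is established, any $b \diamondsuit \in B^{\diamondsuit m}$ lying in the summand $v_f \cdot \mc{H}_{A_{m-1}}$ becomes, under the isomorphism, a $\psi$-invariant element of $M_{J(f)}$ of the form $M_w + \sum_{w' < w} q\Z[q] M_{w'}$; by the uniqueness characterization recalled in \S\ref{sub:MJ} it must coincide with $\underline{M_w}$.

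The main obstacle is the compatibility of the two bar involutions. The involution on $\VV^{\otimes m}$ is $\psi = \Theta \circ (\psi \otimes \cdots \otimes \psi)$ and involves the nontrivial quasi-$\mc{R}$-matrix, whereas the bar involution on $\mc{H}_{A_{m-1}}$ is purely combinatorial. The contribution of $\Theta$ must cancel precisely against the $q$-terms in the Hecke relation $(H_s + q)(H_s - q^{-1}) = 0$ when acting on adjacent tensor factors whose weights are out of order; this cancellation, which is the technical heart of \cite{FKK98}, is what forces the two canonical bases to agree.
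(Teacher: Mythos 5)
The paper states this proposition as a citation to \cite{FKK98} and gives no internal proof, so there is no "paper's own argument" to compare against. Your sketch is a reasonable reconstruction of the Frenkel--Khovanov--Kirillov argument and hits the right three ingredients: (i) for anti-dominant $f$ the quasi-$\mc{R}$-matrix contributes no correction terms, so $v_f$ is $\psi$-invariant and hence equals the canonical basis element $v_{f(1)}\diamondsuit\cdots\diamondsuit v_{f(m)}$; (ii) the cyclic summand $v_f\cdot\mc{H}_{A_{m-1}}$ identifies as an $\mc{H}_{A_{m-1}}$-module with $M_{J(f)}$, with $v_f\mapsto M_e$; (iii) the technical crux is the bar compatibility of the Hecke action, which you correctly defer to the explicit $R$-matrix computation in \cite{FKK98}.

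Two small imprecisions worth flagging. First, the phrase ``by multiplicativity this reduces to checking the identity on each generator $H_s$'' should be stated more carefully: what must be verified is $\psi(x\cdot H_s)=\psi(x)\cdot\overline{H_s}$ for \emph{all} $x\in\VV^{\otimes m}$, not merely $x=v_f$; only then does multiplicativity over words in the generators propagate the identity, because $v_f\cdot H_s$ is no longer a pure tensor and one cannot re-apply the base case. The local $R$-matrix check on adjacent factors supplies exactly this. Second, the fact that the standard basis of $M_{J(f)}$ is indexed by ${}^{J(f)}W$ comes from the discussion in \S\ref{sub:MJ}, not from Lemma~\ref{lem:HK} (which concerns double-coset factorizations); this is a harmless mis-citation but worth correcting.
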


 We define  $R = \sum_{w \not = e} \VV^{\otimes m} \cdot \underline{H_{w}}$, for $\underline{H_{w}} \in \mc{H}_{A_{m-1}}$.
\begin{lem}\label{lem:wedge} 
The quotient space $\wedge^m \VV = \VV^{\otimes m}/ R$ is a based $\U$-module. Moreover, the quotient map $\pi : \VV^{\otimes m} \rightarrow \wedge^m \VV$ is a morphism of based $\U$-modules.
\end{lem}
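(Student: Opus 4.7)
My plan is to establish that $R$ is a \emph{based} submodule of $\VV^{\otimes m}$. Granted this, the general theory of based modules (see \cite[\S27.1]{Lu94}) then immediately yields that $\wedge^m\VV = \VV^{\otimes m}/R$ is a based $\U$-module with basis the set of nonzero images of $B^{\diamondsuit m}$, and that the projection $\pi$ is automatically a based morphism.

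The starting observation is that the $\U$-action commutes with the right $\mc{H}_{A_{m-1}}$-action by $q$-Schur duality, so $R = \VV^{\otimes m}\cdot \mc{H}_+$, where $\mc{H}_+ := \sum_{w\neq e} \mc{H}_{A_{m-1}}\cdot \underline{H_w}$. Using the sign character $e^-:\mc{H}_{A_{m-1}}\to\Qq$, $H_s\mapsto -q$, a short induction on length shows $e^-(\underline{H_w}) = 0$ for all $w\neq e$. Hence $\mc{H}_+ = \ker e^-$ is a two-sided ideal and coincides with the $\Qq$-span of $\{\underline{H_w} : w\neq e\}$. Consequently $R$ is an honest $\U$-submodule.

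To verify basedness, I use the decomposition $\VV^{\otimes m}\cong \bigoplus_f M_{J(f)}$ as right $\mc{H}_{A_{m-1}}$-modules, with $f$ ranging over anti-dominant sequences in $\I^m$. By Proposition~\ref{prop:A}, under this identification $B^{\diamondsuit m}$ becomes the disjoint union of parabolic canonical bases $\bigsqcup_f\{\underline{M^{J(f)}_w}: w\in{}^{J(f)}W_{A_{m-1}}\}$. I then analyze $R\cap M_{J(f)} = M_{J(f)}\cdot\mc{H}_+$ case by case. When $J(f)=\emptyset$ (equivalently, $f$ is strictly increasing), $M_{J(f)} = \mc{H}_{A_{m-1}}$ and $M_{J(f)}\cdot\mc{H}_+ = \mc{H}_+$ is spanned by all canonical basis elements except $\underline{H_e}$. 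When $J(f)\neq\emptyset$, the direct computation
\[
M_e\cdot \underline{H_{w_{J(f)}}} \;=\; \Big(\sum_{u\in W_{J(f)}} q^{\ell(w_{J(f)})-2\ell(u)}\Big)\, M_e
\]
exhibits a nonzero scalar multiple of $M_e$ inside $M_{J(f)}\cdot\mc{H}_+$; since $M_{J(f)}\cdot\mc{H}_+$ is itself a right $\mc{H}_{A_{m-1}}$-submodule (using that $\mc{H}_+$ is two-sided), it must equal all of $M_{J(f)}$ and is in particular spanned by the full canonical basis.

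In either case $R\cap M_{J(f)}$ is spanned by a subset of $B^{\diamondsuit m}\cap M_{J(f)}$, so summing over $f$ shows $R\cap B^{\diamondsuit m}$ is a basis of $R$ and the lemma follows. The surviving basis of $\wedge^m\VV$ corresponds precisely to the strictly increasing $f\in\I^m$, matching the $\binom{n+1}{m}$-dimensional quantum exterior power. The only technical point is the scalar identity in the second case, which is a routine expansion using $\underline{H_{w_{J(f)}}} = \sum_{u\in W_{J(f)}} q^{\ell(w_{J(f)})-\ell(u)} H_u$ together with $M_e\cdot H_u = q^{-\ell(u)}M_e$ for $u\in W_{J(f)}$.
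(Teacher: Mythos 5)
Your proof is correct and runs along essentially the same lines as the paper's: decompose $\VV^{\otimes m}$ over the summands $M_{J(f)}$ and show that $R\cap M_{J(f)}$ is spanned by a subset of the canonical basis, distinguishing the cases $J(f)=\emptyset$ (where it is spanned by $\{\underline{M_w}: w\neq e\}$) and $J(f)\neq\emptyset$ (where it is all of $M_{J(f)}$). Your identification $\mc{H}_+=\ker e^-$ and the scalar computation $M_e\cdot\underline{H_{w_{J(f)}}}=\bigl(\sum_{u\in W_{J(f)}}q^{\ell(w_{J(f)})-2\ell(u)}\bigr)M_e$ simply make explicit the justification for the step $M_{J(f)}\cdot\mc{H}_+=M_{J(f)}$ that the paper passes over without comment.
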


\begin{proof}
It suffices to prove that $R$ is a based $\U$-submodule of $\VV^{\otimes m}$. It follows from the $q$-Schur duality that $R$ is a $\U$-submodule. We prove it is based. 

Recall $\VV^{\otimes m} =  \bigoplus_{ \text{anti-dominant }f \in \I^m} v_f \cdot \mc{H}_{A_{m-1}}$. For any anti-dominant $f$ with $J(f) \neq \emptyset $, we have 
\begin{align*}
&\sum_{e \neq w \in W} v_f \cdot  \mc{H}_{A_{m-1}}\cdot \underline{H_{w}} \\
\cong &\sum_{e \neq w \in W}  \underline{H_{w_{J(f)} }} \cdot  \mc{H}_{A_{m-1}} \cdot \underline{H_w} \\ 
=& \sum_{e \neq w \in W} \underline{H_{w_{J(f)} }} \cdot  \mc{H}_{A_{m-1}} \\
\cong\, &v_f\cdot  \mc{H}_{A_{m-1}}.
\end{align*} 

So in this case,  the subspace $\sum_{e \neq w \in W} v_f \cdot  \mc{H}_{A_{m-1}}\cdot \underline{H_{w}}$ admits basis $\{v_f \underline{{H}_{w}}\vert w \in {}^{J(f)}W \}$. On the other hand, if $f$ is anti-dominant with $J(f) = \emptyset$, the subspace $\sum_{e \neq w \in W} v_f \cdot  \mc{H}_{A_{m-1}}\cdot \underline{H_{w}}$ admits the basis $\{v_f \underline{H_w} \vert e \neq w \in W \}$.
 It follows that $R$ is based. 
\end{proof}

\begin{cor}\label{cor:Ubased}
Let $m_1, m_2, \dots, m_k \le n$. The quotient map $\pi: \VV^{\otimes (m_1+ m_2 + \cdots + m_k)} \rightarrow \wedge^{m_1} \VV \otimes \wedge^{m_2} \VV \otimes \cdots \wedge^{m_k}\VV$ is a morphism of based $\U$-modules. 
\end{cor}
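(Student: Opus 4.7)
The plan is to factor $\pi$ as a tensor product of the individual quotient maps and then invoke Lemma~\ref{lem:tensor} iteratively, using the associativity of the $\diamondsuit$-construction to identify the relevant based structures.

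First I would write $\VV^{\otimes (m_1 + \cdots + m_k)} = \VV^{\otimes m_1} \otimes \VV^{\otimes m_2} \otimes \cdots \otimes \VV^{\otimes m_k}$ and observe that $\pi$ factors as $\pi = \pi_1 \otimes \pi_2 \otimes \cdots \otimes \pi_k$, where $\pi_i : \VV^{\otimes m_i} \to \wedge^{m_i}\VV$ is the quotient map from Lemma~\ref{lem:wedge}. Each $\pi_i$ is a morphism of based $\U$-modules by that lemma, once $\VV^{\otimes m_i}$ is equipped with the iterated $\diamondsuit$-basis $B^{\diamondsuit m_i}$ and $\wedge^{m_i}\VV$ is equipped with the induced quotient basis.

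Next I would check that the two natural based structures on $\VV^{\otimes(m_1+\cdots+m_k)}$ agree: the one obtained by iterating $\diamondsuit$ factor-by-factor on the full tensor product, and the one obtained by first forming $(\VV^{\otimes m_i}, B^{\diamondsuit m_i})$ for each $i$ and then applying $\diamondsuit$ across these blocks. This is precisely the associativity of the $\diamondsuit$-construction, which is built into the definition in \cite[Chap.~27]{Lu94} (the unique bar-invariant element in the prescribed $q\Z[q]$-form does not depend on how the tensor factors are grouped, because the quasi-$\mc{R}$-matrix $\Theta$ satisfies the coassociativity identity $(\Delta \otimes \id)(\Theta) = \Theta_{13}\Theta_{23}$). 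Hence the iterated basis $B^{\diamondsuit m_1} \diamondsuit \cdots \diamondsuit B^{\diamondsuit m_k}$ coincides with $B^{\diamondsuit(m_1+\cdots+m_k)}$.

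Finally I would apply Lemma~\ref{lem:tensor} inductively on $k$: the morphism $\pi_1 \otimes \pi_2$ is based by a direct application of the lemma; grouping this with $\pi_3$, the map $(\pi_1 \otimes \pi_2) \otimes \pi_3$ is based, since $(\wedge^{m_1}\VV) \otimes (\wedge^{m_2}\VV)$ is a based $\U$-module under $\diamondsuit$; and so on up to $k$ factors. The only mild subtlety, which I would flag but not elaborate, is the verification of the associativity step above; once that is in hand the rest is a formal induction. Therefore $\pi = \pi_1 \otimes \cdots \otimes \pi_k$ is a morphism of based $\U$-modules, proving the corollary.
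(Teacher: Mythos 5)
Your proof is correct and takes essentially the same route as the paper, whose own proof just cites Lemma~\ref{lem:tensor} and Lemma~\ref{lem:wedge}; your factorization $\pi = \pi_1 \otimes \cdots \otimes \pi_k$ followed by induction on $k$ is exactly what those citations unfold to. The associativity of the $\diamondsuit$-construction that you flag is indeed the one detail left implicit in the paper, and your appeal to the coassociativity $(\Delta \otimes \id)(\Theta) = \Theta_{13}\Theta_{23}$ of the quasi-$\mc{R}$-matrix is the correct justification for it.
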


\begin{proof}
The corollary follows from Lemma~\ref{lem:tensor} and Lemma \ref{lem:wedge}. 
\end{proof}

\begin{lem}
\label{lem:emb}
Let $L(\lambda_i)$ be finite-dimensional simple $\U$-modules for $i = 1$, $\dots$, $l$. We have the following based embedding of $\U$-modules for suitable $m_1$, $\dots$, $m_k \le n$: 
\[
L(\lambda_1) \otimes \cdots \otimes L(\lambda_l) \longrightarrow \wedge^{m_1} \VV \otimes \wedge^{m_2} \VV \otimes \cdots \wedge^{m_k}\VV.
\]
\end{lem}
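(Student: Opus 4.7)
My plan is to reduce the statement to the single-factor case $L(\lambda) \hookrightarrow \wedge^{m_1}\VV \otimes \cdots \otimes \wedge^{m_s}\VV$, and then glue the resulting embeddings together with \lemref{lem:tensor}.

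First I would write each dominant weight $\lambda_i \in X^+$ as a sum of fundamental weights
\[
\lambda_i = \omega_{m_{i,1}} + \omega_{m_{i,2}} + \cdots + \omega_{m_{i,s_i}}, \qquad 1 \le m_{i,j} \le n,
\]
which is always possible since the fundamental weights generate $X^+$ over $\N$. The point is that $L(\omega_m)$ is (up to isomorphism) the $m$-th fundamental representation $\wedge^m \VV$, which is a based $\U$-module thanks to \lemref{lem:wedge}, with highest weight vector $\eta_m := v_1 \wedge v_2 \wedge \cdots \wedge v_m$, the image of $v_1 \otimes v_2 \otimes \cdots \otimes v_m$ under the based quotient map $\pi$ of \lemref{lem:wedge}. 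In particular $\eta_m$ is a canonical basis element of $\wedge^m \VV$.

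Next I would consider, for each fixed $i$, the tensor product $\wedge^{m_{i,1}}\VV \otimes \cdots \otimes \wedge^{m_{i,s_i}}\VV$, which is based by iterated application of \lemref{lem:1}. The tensor of highest weight vectors $\eta_{m_{i,1}} \otimes \cdots \otimes \eta_{m_{i,s_i}}$ is a canonical basis element: indeed, the defining triangularity \eqref{eq:1} is trivially satisfied because each $\eta_{m_{i,j}}$ spans its weight space, so no lower-order correction terms can occur, and hence $\eta_{m_{i,1}} \diamondsuit \cdots \diamondsuit \eta_{m_{i,s_i}} = \eta_{m_{i,1}} \otimes \cdots \otimes \eta_{m_{i,s_i}}$. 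Applying \cite[Proposition~27.1.7 and \S27.2]{Lu94} (Lusztig's theory of based submodules generated by an extremal canonical basis element), the $\U$-submodule generated by this element is a based submodule and, being a highest weight module of highest weight $\lambda_i$, must be isomorphic to $L(\lambda_i)$. This yields a based embedding
\[
\iota_i : L(\lambda_i) \longrightarrow \wedge^{m_{i,1}}\VV \otimes \cdots \otimes \wedge^{m_{i,s_i}}\VV.
\]

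Finally I would take the tensor product $\iota_1 \otimes \iota_2 \otimes \cdots \otimes \iota_l$; by \lemref{lem:tensor} (applied inductively) this is a based morphism from $L(\lambda_1) \otimes \cdots \otimes L(\lambda_l)$ to $\wedge^{m_1}\VV \otimes \cdots \otimes \wedge^{m_k}\VV$, where $(m_1, \ldots, m_k)$ is the concatenation of the tuples $(m_{i,1}, \ldots, m_{i,s_i})$. Injectivity is automatic from injectivity of each $\iota_i$ by flatness over $\Qq$. The main obstacle is really step two, the identification of the submodule generated by the tensor of highest weight vectors as a based copy of $L(\lambda_i)$; once one knows this element lies in the canonical basis and is a highest weight vector, however, the conclusion follows from Lusztig's results on based submodules.
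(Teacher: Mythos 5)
Your proof is correct and takes essentially the same approach as the paper: reduce to the single‑factor case via \lemref{lem:tensor}, decompose $\lambda_i$ into fundamental weights, identify $L(\omega_m)\cong\wedge^m\VV$, and invoke \cite[\S27.2]{Lu94} to conclude that $L(\lambda_i)$ sits as the based submodule $(\wedge^{m_{i,1}}\VV\otimes\cdots\otimes\wedge^{m_{i,s_i}}\VV)[\ge\lambda_i]$. The only cosmetic difference is that the paper reduces to $l=1$ at the outset, while you prove the $l=1$ case first and then glue; your extra observation that the tensor of highest‑weight vectors is already $\psi$‑invariant and hence a canonical basis element is a useful unpacking of the same citation.
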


\begin{proof}
Thanks to Lemma~\ref{lem:tensor}, it suffices to prove the statement for $l=1$.
We write $\lambda_1 = \sum^{n-1}_{i=1}a_i \omega_i \in X^+$, where $\omega_i$ is the $i$th fundamental weight.  Recall we have $\wedge^{i} \VV \cong L({\omega_i})$ for $1 \le i \le n-1$.

 Therefore we have the natural embedding 
\[
L(\lambda_1) \longrightarrow \VV^{\otimes a_1} \otimes (\wedge^2 \VV)^{a_2} \otimes \dots \otimes (\wedge^{n-1} \VV)^{a_{n-1}}.
\]
The fact that this is a based embedding follows from \cite[\S27.2]{Lu94}, since $L(\lambda_1)$ is exactly the submodule $\Big(  \VV^{\otimes a_1} \otimes (\wedge^2 \VV)^{a_2} \otimes \dots \otimes (\wedge^{n-1} \VV)^{a_{n-1}} \Big) [\ge \lambda_1]$.
\end{proof}




\section{Positivity of $\imath$-canonical bases}

\subsection{}

Let us recall the theory of canonical bases arising from quantum symmetric pairs. We first recall the following two families of quantum symmetric pairs considered in \cite{BW13}.
\begin{definition}
	\begin{enumerate}
		\item  If $n+1 = 2r+2$ is even, then we consider the quantum symmetric pair $(\Ui, \U)$ whose Satake diagram is of the form 
\begin{center}
\begin{tikzpicture}

 \draw[dotted]  (0 .1,0) node[left,below] {$\scriptstyle 1$} -- (1.9,0) ;
 \draw (2.1,0) node[below]{$\scriptstyle \frac{n-1}{2}$} -- (2.9,0) ;
 \draw (3.1,0) node[below]{$\scriptstyle \frac{n+1}{2}$} -- (3.9,0);
 \draw[dotted] (4.1,0) node[below] {$\scriptstyle \frac{n+3}{2}$} -- (5.9,0);
 \draw (6.1,0) node[below] {$\scriptstyle n$};
\draw (0,0) node (-r) {$\circ$};
 \draw (2,0) node (-1) {$\circ$};
\draw (3,0) node (0) {$\circ$};
\draw (4,0) node (1) {$\circ$}; 
\draw (6,0) node (r) {$\circ$};
\draw[<->] (-r.north east) .. controls (3,1.5) .. (r.north west) ;
\draw[<->] (-1.north) .. controls (3,1) ..  (1.north) ;
\draw[<->] (0) edge[<->, loop above] (0);
\end{tikzpicture}
\end{center}

The coideal subalgebra $\Ui$ of $\U$ is defined to be the $\Qq$-subalgebra of $\U$ generated by
\[
B_i = E_i +  q^{\delta_{i, \frac{n+1}{2}}}F_{n+1-i} K^{-1}_i + \delta_{i, \frac{n+1}{2}} K^{-1}_i, \quad K_i K^{-1}_{n+1-i}, \quad i = 1, 2, \dots,n.
\]
	\item  If  $n+1 = 2r+1$ is odd, then we consider the quantum symmetric pair $(\Ui, \U)$ whose Satake diagram is of the form 

\begin{center}
\begin{tikzpicture}
 \draw[dotted]  (0.6,0) node[below]  {$\scriptstyle 1$} -- (2.4,0)  ;
 \draw (2.6,0) node[below]  {$\scriptstyle \frac{n}{2}$}
 -- (3.4,0) ;
 \draw[dotted] (3.6,0) node[below]  {$\scriptstyle \frac{n}{2}+1$} -- (5.4,0);
 \draw (5.6,0) node[below] {$\scriptstyle n$} ;
\draw (0.5,0) node (-r) {$\circ$};
 \draw (2.5,0) node (-1) {$\circ$};
\draw (3.5,0) node (1) {$\circ$}; 
\draw (5.5,0) node (r) {$\circ$};
\draw[<->] (-r.north east) .. controls (3,1) ..(r.north west) ;
\draw[<->] (-1.north) .. controls (3,0.5) ..  (1.north) ;
\end{tikzpicture}
\end{center}

	The coideal subalgebra $\Ui$ of $\U$ is defined to be the $\Qq$-subalgebra of $\U$ generated by
\[
B_i = E_i +  q^{-\delta_{i,\frac{n}{2}+1}}F_{n+1-i} K^{-1}_i, \quad K_i K^{-1}_{n+1-i}, \quad i = 1, 2, \dots,n.
\]

\end{enumerate}
\end{definition}
We shall treat both cases simultaneously. The algebra $\Ui$ admits an anti-linear involution $\psi_\imath$ such that 
\[
\psi_\imath(B_i) = B_i, \quad \psi_\imath(K_i K^{-1}_{n+1-i}) = K^{-1}_i K_{n+1-i}.
\]
There also exists  a unique (up to a scalar) intertwiner $\Upsilon$ in the completion of $\U$, such that (as an identity in the completion)
\[
 \psi_\imath (u) \Upsilon = \Upsilon \psi(u), \quad \text{for } u \in \Ui.
\]
The element $\Upsilon$ becomes  a well-defined operator for any finite-dimensional $\U$-module. For any based $\U$-module $(M, B)$ with the associated involution $\psi$, we let 
\begin{equation}\label{eq:bar}
\psi_\imath = \Upsilon \circ \psi : M \longrightarrow M.
\end{equation}

\begin{prop}\cite[Theorems~4.20 and 6.22]{BW13}\label{prop:iCB}
 Let $(M, B)$ be a based $\U$-module. Then there exists a unique $\imath$-canonical basis $B^\imath = \{b^\imath \vert b \in B\}$ of the $\Ui$-module $M$, such that 
	\begin{equation}\label{eq:iCB}
		\psi_\imath( b^\imath) = b ^\imath, \quad \text{ and } \quad b^\imath = b + \sum_{b \in B} t_{b;b'} b', \quad \text{with } t_{b;b'} \in q\Z[q].
	\end{equation}
\end{prop}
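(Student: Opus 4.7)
The plan is to construct $b^\imath$ by the standard triangular lifting algorithm of Lusztig (cf. \cite[\S24.2]{Lu94}), with the anti-linear map $\psi_\imath = \Upsilon \circ \psi$ playing the role of the bar involution and the partial order by weight on $B$ providing well-foundedness. I would carry out the argument in four steps.

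First, I would verify that $\psi_\imath$ is a well-defined anti-linear involution on $M$. Anti-linearity is immediate from the anti-linearity of $\psi$ together with the $\Qq$-linearity of the action of $\Upsilon$. The identity $\psi_\imath^2 = \text{id}$ reduces to the defining property $\Upsilon \cdot \psi(\Upsilon) = 1$ of the intertwiner: for $x \in M$,
\[
\psi_\imath^2(x) \;=\; \Upsilon \cdot \psi\bigl(\Upsilon \cdot \psi(x)\bigr) \;=\; \Upsilon \cdot \psi(\Upsilon) \cdot x \;=\; x.
\]

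Second, I would establish a triangularity statement. Write $\Upsilon = \sum_{\mu \in \N I} \Upsilon_\mu$ with $\Upsilon_\mu \in \U^+_\mu$ and $\Upsilon_0 = 1$. For a weight vector $b \in B$ of weight $\la$, $\Upsilon_\mu(b)$ lives in the weight space $M_{\la + \mu}$. Since $\psi$ fixes $B$ elementwise and $M$ is finite-dimensional, expanding in $B$ yields
\[
\psi_\imath(b) \;=\; b + \sum_{\substack{b' \in B \\ \text{wt}(b') > \text{wt}(b)}} r_{b, b'}(q)\, b', \qquad r_{b, b'}(q) \in \Z[q, q^{-1}],
\]
with only finitely many nonzero terms. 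The integrality of the $r_{b, b'}$ rests on the fact that $\Upsilon_\mu$ preserves the integral form of the based module, which in this AIII/AIV setting is available from the explicit formulas recalled in \cite{BW13}.

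Third, I would run the lifting algorithm. Enumerate $B = \{b_1, \ldots, b_N\}$ so that $i < j$ whenever $\text{wt}(b_i) > \text{wt}(b_j)$. For $b_1$ of maximal weight the sum above is empty, so $\psi_\imath(b_1) = b_1$ and I set $b^\imath_1 := b_1$. Inductively, seeking $b^\imath_i = b_i + \sum_{k < i} t_{i,k}(q)\, b_k$ with $t_{i,k} \in q\Z[q]$ and $\psi_\imath(b^\imath_i) = b^\imath_i$, a direct expansion (using the induction hypothesis and the formula in step two) converts the fixed-point condition into the system
\[
t_{i,k} - \overline{t_{i,k}} \;=\; r_{i,k} + \sum_{k < j < i} \overline{t_{i,j}}\, r_{j,k},
\]
where $\bar{\ }$ denotes $q \mapsto q^{-1}$. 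The identity $\psi_\imath^2 = \text{id}$ forces the right-hand side to be bar-anti-invariant and to lie in $\Z[q, q^{-1}]$, hence the equation admits a unique solution in $q\Z[q]$. Uniqueness of the overall lift follows by the usual argument: two candidates differ by a $\psi_\imath$-invariant element supported away from $b$ with coefficients in $q\Z[q]$; extracting the highest-weight contribution in the support and applying bar-anti-invariance of the leading coefficient gives a value in $q\Z[q] \cap q^{-1}\Z[q^{-1}] = 0$, and iteration yields the claim.

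The main obstacle is the integrality side of step three, namely confirming that the coefficients $r_{b, b'}$ lie in $\Z[q, q^{-1}]$ (so the recursion stays within $q\Z[q]$ rather than escaping into $q\Z[q] + q^{-1}\Z[q^{-1}]$); this is precisely where one invokes the integral-form analysis of $\Upsilon$ acting on tensor products of natural representations carried out in \cite{BW13}. The remaining formal manipulations (involution, triangularity, recursion, uniqueness) are then standard adaptations of \cite[Lemma~24.2.1]{Lu94}.
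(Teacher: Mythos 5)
Your proposal follows the same Lusztig-style bar-lifting argument that BW13 uses to prove this result (the paper itself merely cites it without proof), and the key technical input — integrality of $\Upsilon$ — is rightly identified as the place where BW13's analysis must be invoked. One factual slip: the intertwiner $\Upsilon$ lies in a completion of $\U^-$, not $\U^+$, so $\Upsilon_\mu \in \U^-_{-\mu}$ and $\psi_\imath(b) = b + \sum_{\mathrm{wt}(b') < \mathrm{wt}(b)} r_{b,b'}\, b'$; this reverses the enumeration in your third step (one should start from a \emph{lowest}-weight $b$, whose correction sum is empty), but it actually makes your uniqueness argument — extracting the highest-weight coefficient and using bar-invariance to force it into $q\Z[q] \cap q^{-1}\Z[q^{-1}] = 0$ — internally consistent rather than inconsistent. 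With that sign corrected, the argument is complete modulo the cited integrality of $\Upsilon$.
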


\begin{lem}\label{lem:Uibased}Let $f : M_1 \rightarrow M_2$ be a based morphism of based $\U$-modules $(M_1, B_1)$ and $(M_2, B_2)$. Then $f(B^\imath_1) \subset B^\imath_2 \cup \{0\}$.
\end{lem}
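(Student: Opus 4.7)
The plan is to imitate the argument for Lemma~\ref{lem:tensor}, but using Proposition~\ref{prop:iCB} in place of Lemma~\ref{lem:1}. The first thing to establish is that $f$ intertwines the $\imath$-bar involutions: since $f$ is based we already have $f \circ \psi = \psi \circ f$, and since $\Upsilon$ lies in the completion of $\U$ acting on a single module (not a tensor product), the $\U$-linearity of $f$ automatically gives $f \circ \Upsilon = \Upsilon \circ f$ on finite-dimensional modules. Composing these yields $f \circ \psi_\imath = \psi_\imath \circ f$.

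Next, fix $b \in B_1$ and apply $f$ to the defining expansion $b^\imath = b + \sum_{b' \in B_1} t_{b;b'} b'$ from \eqref{eq:iCB}. Because $f$ is based, each $f(b), f(b')$ lies in $B_2 \cup \{0\}$, so
\[
f(b^\imath) = f(b) + \sum_{c \in B_2} (\text{elements of } q\Z[q]) \cdot c,
\]
and by Step~1 this vector is $\psi_\imath$-invariant. If $f(b) \neq 0$, then $f(b)\in B_2$ and the expression above is of exactly the form characterizing $f(b)^\imath$ uniquely in Proposition~\ref{prop:iCB}; hence $f(b^\imath) = f(b)^\imath \in B_2^\imath$.

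It remains to handle the case $f(b) = 0$, where I need to show the $\psi_\imath$-invariant element $f(b^\imath) \in \sum_{c \in B_2} q\Z[q]\cdot c$ must vanish. The trick is to pass into the $\imath$-canonical basis: the transition matrix from $B_2^\imath$ to $B_2$ is upper unitriangular with off-diagonal entries in $q\Z[q]$, so its inverse has the same form (as $q\Z[q]$ is closed under addition and multiplication). Thus $f(b^\imath) = \sum_{d \in B_2} c_d \, d^\imath$ with all $c_d \in q\Z[q]$. Bar-invariance forces $\overline{c_d} = c_d$, but $q\Z[q] \cap q^{-1}\Z[q^{-1}] = \{0\}$, so every $c_d = 0$ and $f(b^\imath) = 0$.

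I expect the main subtlety to be this last step, i.e.\ checking that the only $\psi_\imath$-invariant element in the ``strictly positive'' part of the integral lattice is zero; the other ingredients (commutation with $\Upsilon$, the unitriangularity characterization of $b^\imath$) are immediate from the setup recalled above.
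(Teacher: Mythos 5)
Your proof is correct and follows essentially the same route as the paper: establish that $f$ commutes with $\psi_\imath = \Upsilon \circ \psi$ (via the $\U$-linearity of $f$ and the fact that $\Upsilon$ lives in the completion of $\U$), then apply $f$ to the defining expansion of $b^\imath$ and invoke uniqueness in Proposition~\ref{prop:iCB}. The paper simply says the last step ``is entirely similar to that of Lemma~\ref{lem:tensor}''; your write-up actually spells out the needed fact that a $\psi_\imath$-invariant vector lying in $\sum_{c\in B_2} q\Z[q]\,c$ must vanish (using the unitriangularity of the transition from $B_2$ to $B_2^\imath$ and $q\Z[q]\cap q^{-1}\Z[q^{-1}]=\{0\}$), which is more detail than either proof in the paper gives.
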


\begin{proof}

Recall $\Upsilon$ is in certain completion of $\U$, and acts (well-definedly) on any finite-dimensional  $\U$-module.  Hence $f$ commutes with the map $\psi_{\imath} = \Upsilon \circ \psi$ on $M_1$ and $M_2$. The rest of the argument (involving the partial orders and the integral lattices) is entirely similar to that of Lemma~\ref{lem:tensor}.
\end{proof}


\subsection{}
Let $W_{B_m}$ be the Weyl group of type $B_m$ with simple reflections $\{s_0, s_1, \dots, s_{m-1}\}$, where we have 
\[
\begin{split}
s^2_i = 1,  \quad \text{ for all } i,
 \\
s_i s_{i+1} s_i = s_{i+1} s_i s_{i+1}, \quad &\text{ and } \quad
s_i s_j = s_j s_i,   \qquad \text{for } |i-j| >1, 
\\
s_0 s_1 s_0 s_1=s_1s_0 s_1s_0, \quad &\text{ and } \quad s_0 s_i = s_i s_0 , \qquad \text{for } i >1.
\end{split}
\]
Let $\mc{H}_{B_m}$ be the associated Hecke algebra of type $B_m$.

Both the Hecke algebra $\mc{H}_{B_m}$ and the coideal subalgebra $\Ui$ act naturally on the tensor space $\VV^{\otimes m}$. 
\begin{prop}\cite[\S5 and \S6]{BW13}\label{prop:BD}
\begin{enumerate}
	\item The actions of $\Ui$ and $\mc{H}_{B_m}$ on $\VV^{\otimes m}$ commute. 
	\item The $\imath$-canonical basis on $\VV^{\otimes m}$ can be identified with the (parabolic) Kazhdan-Lusztig basis of $\mc{H}_{B_m}$.
\end{enumerate}
\end{prop}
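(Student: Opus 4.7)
The plan is to adapt the classical Schur--Jimbo duality to the quantum symmetric pair setting, following the strategy of \cite{BW13}. Since the natural $\mc{H}_{A_{m-1}}$-action on $\VV^{\otimes m}$ already commutes with all of $\U$ (hence with $\Ui$), to establish part (1) I only need to supply an additional action of the extra generator $s_0$ of $W_{B_m}$ and check the new relations. The plan is to write down an explicit operator $T_0 \colon \VV \to \VV$ (essentially a single-strand $K$-matrix determined by the Satake diagram), extended as $T_0 \otimes \id^{\otimes (m-1)}$, and to verify: (a) the Hecke quadratic relation $(T_0 + q)(T_0 - q^{-1}) = 0$, which is immediate from the eigenvalue structure of $T_0$; (b) the type-$B$ braid relation $T_0 T_1 T_0 T_1 = T_1 T_0 T_1 T_0$, which reduces to a finite calculation on $\VV \otimes \VV$; and (c) commutativity of $T_0$ with each generator $B_i$ and $K_i K^{-1}_{n+1-i}$ of $\Ui$, which by the coproduct formulas reduces to a check on at most the first two tensor factors.

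For part (2), the crux is to show that two a priori distinct bar involutions on $\VV^{\otimes m}$ agree: the involution $\psi_\imath = \Upsilon \circ \psi$ from Proposition~\ref{prop:iCB}, and the semilinear involution obtained by transporting the bar involution of $\mc{H}_{B_m}$ through the commuting action of part (1). Once this identification is made, both the $\imath$-canonical basis characterization (bar-invariance plus lower-triangularity with coefficients in $q\Z[q]$) from Proposition~\ref{prop:iCB}, and the defining property of the (parabolic) Kazhdan--Lusztig basis of the appropriate induced $\mc{H}_{B_m}$-module, are governed by the same involution and the same triangularity; uniqueness then forces the two bases to coincide, in complete parallel with the type-$A$ statement of Proposition~\ref{prop:A}.

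The main obstacle is the identification of bar involutions in step (2). The strategy is to fix a distinguished $\psi_\imath$-invariant anti-dominant tensor $v_f$ that simultaneously plays the role of the generator $M_e$ of the induced $\mc{H}_{B_m}$-module, and then to propagate the compatibility to all of $\VV^{\otimes m}$ via the expected rule $\psi_\imath(v \cdot h) = \psi_\imath(v) \cdot \overline{h}$. The genuine technical difficulty here is that $\Upsilon$ lies only in a completion of $\U$, so explicit control over $\Upsilon$ applied to tensor products is required; this in turn forces an inductive analysis of the intertwining identity $\psi_\imath(u)\Upsilon = \Upsilon \psi(u)$ for $u \in \Ui$, matched against the explicit definition of $T_0$. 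Once the base case is established, induction along the $\mc{H}_{B_m}$-action completes the identification of the bases.
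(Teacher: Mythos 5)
This proposition is cited from \cite[\S5 and \S6]{BW13} and the paper gives no proof of its own, so there is nothing internal to compare your sketch against; what you have written is a reconstruction of the argument in the cited source, and it is consistent with the strategy there: the type-$A$ subalgebra $\mc{H}_{A_{m-1}}$ already commutes with all of $\U \supset \Ui$ by $q$-Schur duality, so only the extra generator $H_0$ needs to be supplied and checked, and the identification of the $\imath$-canonical basis with the parabolic Kazhdan--Lusztig basis does come down to matching the two bar involutions, which is indeed the technical heart of the result. One imprecision worth flagging: because the coproduct on the coideal lands in $\Ui \otimes \U^{\otimes(m-1)}$ and $T_0$ acts only on the leftmost tensor factor, commutativity of $T_0$ with the $\Ui$-action on $\VV^{\otimes m}$ reduces to the one-factor statement that $T_0$ is a $\Ui$-module endomorphism of $\VV$; the genuine two-factor computation only enters for the type-$B$ braid relation $T_0T_1T_0T_1 = T_1T_0T_1T_0$. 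Otherwise the outline is on target, and the step you single out as the main obstacle (reconciling $\psi_\imath = \Upsilon \circ \psi$ with the transported Hecke bar involution via a distinguished $\psi_\imath$-invariant generator) is exactly the right place to focus.
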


\begin{prop}\label{prop:VV}
Let $\alpha, \beta \ge 0$ with $\alpha+\beta =m$. Let  $(\VV^{\otimes \alpha}, B_\alpha)$, $(\VV^{\otimes \beta}, B_\beta)$ and $(\VV^{\otimes m}, B)$ be based $\U$-modules of the tensor product of natural representations. 
	For any $b \in B$, we have 
\[
		b^\imath = \sum_{(b_\alpha, b_\beta) \in B_\alpha \times B_\beta, } t_{b;b_\alpha, b_\beta} (b^\imath_\alpha \otimes b_\beta), \qquad \text{ with }  t_{b;b_\alpha, b_\beta} \in \N[q].
\]	

\end{prop}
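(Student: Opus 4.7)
The plan is to translate the statement into a positivity assertion about transition coefficients between two bases of the Hecke algebra module $\VV^{\otimes m}$, and then invoke \propref{prop:MJ}. First I would identify $B^\imath$ with parabolic Kazhdan--Lusztig bases. By the $\imath$-Schur duality of \propref{prop:BD}, $\VV^{\otimes m}$ decomposes as an $\mc{H}_{B_m}$-module into a direct sum $\bigoplus_f v_f \cdot \mc{H}_{B_m}$ indexed by the $W_{B_m}$-antidominant standard basis vectors $v_f$, with each summand isomorphic to a parabolic module $M^{B_m}_{J(f)}$ in the sense of \S\ref{sub:MJ} (applied to $W = W_{B_m}$); under this isomorphism the restriction of $B^\imath$ to each summand is the parabolic Kazhdan--Lusztig basis $\{\underline{M_w}\}$.

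Next I would identify $B^\imath_\alpha \otimes B_\beta$ with a hybrid basis. Let $S = \{s_0, s_1, \dots, s_{m-1}\}$ denote the simple reflections of $W_{B_m}$, and set $I := S \setminus \{s_\alpha\}$, so that $W_I = W_{B_\alpha} \times W_{A_{\beta-1}}$ and $\mc{H}_I \cong \mc{H}_{B_\alpha} \otimes \mc{H}_{A_{\beta-1}}$. The restriction of the $\mc{H}_{B_m}$-action on $\VV^{\otimes m}$ to $\mc{H}_I$ factors as the natural $\mc{H}_{B_\alpha} \otimes \mc{H}_{A_{\beta-1}}$-action on $\VV^{\otimes \alpha} \otimes \VV^{\otimes \beta}$. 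Applying \propref{prop:BD}(2) to the first tensor factor and \propref{prop:A} to the second, the basis $B^\imath_\alpha \otimes B_\beta$ becomes a tensor product of parabolic Kazhdan--Lusztig bases of parabolic modules of $\mc{H}_{B_\alpha}$ and $\mc{H}_{A_{\beta-1}}$, which is precisely the hybrid basis $\{\underline{M^I_y}\}$ of \S\ref{sub:MJ} attached to $I$ and to each summand $M^{B_m}_{J(f)}$. Applying \propref{prop:MJ} (which holds verbatim for $W = W_{B_m}$) summand by summand then yields $\underline{M_w} = \sum_y p^{I,+}_{y,w}\,\underline{M^I_y}$ with $p^{I,+}_{y,w} \in \N[q]$, and unwinding the identifications produces the desired expansion with $t_{b; b_\alpha, b_\beta} \in \N[q]$.

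The main technical hurdle I anticipate is the bookkeeping in the middle step: one has to verify that each $\mc{H}_I$-parabolic summand $M_{p_-} \cdot \mc{H}_I$ of $M^{B_m}_{J(f)}$, viewed inside $\VV^{\otimes m}$, really coincides with a product $M^{B_\alpha}_{J_1} \otimes M^{A_{\beta-1}}_{J_2}$ of parabolic modules of the two factor Hecke algebras, with the trivial characters for which the Schur dualities in \propref{prop:BD} and \propref{prop:A} are stated. The (type-$B$) antidominant orbits of $\mc{H}_{B_m}$ on $\I^m$ differ from the pairs of (type-$B_\alpha$ and type-$A_{\beta-1}$) antidominant orbits on the two factor spaces, so the bijection between double cosets and pairs of single cosets in \lemref{lem:HK} must be matched carefully against the combinatorics of sequences $f \in \I^m$. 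Once this matching is settled, the positivity follows immediately from \propref{prop:MJ}.
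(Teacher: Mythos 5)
Your proof follows essentially the same route as the paper: identify $B^\imath$ with the parabolic Kazhdan--Lusztig basis of $\mc{H}_{B_m}$ via Proposition~\ref{prop:BD}, identify $B^\imath_\alpha \otimes B_\beta$ with the hybrid basis attached to the parabolic subalgebra $\mc{H}_I = \mc{H}_{B_\alpha}\times\mc{H}_{A_{\beta-1}}$ with $I = S\setminus\{s_\alpha\}$ using Propositions~\ref{prop:BD} and~\ref{prop:A}, and then invoke Proposition~\ref{prop:MJ}. The bookkeeping issue you flag---checking that the $\mc{H}_I$-orbit decomposition of each summand $M^{B_m}_{J(f)}$ via the double cosets of Lemma~\ref{lem:HK} matches the decomposition of $\VV^{\otimes\alpha}\otimes\VV^{\otimes\beta}$ into tensor products of parabolic modules coming from the two Schur dualities---is a legitimate detail, but it is also left implicit in the paper's proof, which simply cites Propositions~\ref{prop:A} and~\ref{prop:BD} for the identification. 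So your proposal is correct and matches the paper's argument in structure and level of detail, with the small addition that you name the relevant parabolic subset $I$ explicitly and anticipate the coset-matching step that the paper takes for granted.
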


\begin{proof}

We consider the parabolic subalgebra $\mc{H}_{B_{\alpha}} \times \mc{H}_{A_{\beta-1}}$ of $\mc{H}_{B_m}$ generated by 
\[\{H_{s_0}, \dots, H_{s_{\alpha-1}} , H_{s_{\alpha+1}}, \dots, H_{s_{m-1}}\}.\]

It acts naturally on $\VV^{\otimes \alpha} \otimes \VV^{\otimes \beta} = \VV^{\otimes m}$. Then by Proposition~\ref{prop:A} and Proposition~\ref{prop:BD}, we see that the basis $\{b^\imath_{\alpha} \otimes b_\beta \vert b^\imath_\alpha \in B_{\alpha}, b_\beta \in B_{\beta}\}$ can be identified with the (parabolic) Kazhdan-Lusztig basis of  $\mc{H}_{B_{\alpha}} \times \mc{H}_{A_{\beta-1}}$. Therefore the proposition follows from Proposition~\ref{prop:MJ} and Proposition~\ref{prop:GH}.
\end{proof}

\begin{thm}\label{thm:BW13}
Let $L(\lambda_i)$ be finite-dimensional simple $\U$-modules for $i = 1$, $\dots$, $k$. For any $0 \le l \le k$, we consider the based modules $(L(\lambda_1) \otimes \cdots \otimes L(\lambda_l), B_{\alpha})$, $(L(\lambda_{l+1}) \otimes \cdots \otimes L(\lambda_k), B_{\beta})$, and $(L(\lambda_1) \otimes \cdots \otimes L(\lambda_k), B)$. For any $b \in B$, we have 
\[
		b^\imath = \sum_{(b_\alpha, b_\beta) \in B_\alpha \times B_\beta, } t_{b;b_\alpha, b_\beta} (b^\imath_\alpha \otimes b_\beta), \qquad \text{ with } t_{b;b_\alpha, b_\beta} \in \N[q].
\]	
\end{thm}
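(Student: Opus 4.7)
The plan is to reduce Theorem~\ref{thm:BW13} to Proposition~\ref{prop:VV}, which already handles tensor powers of the natural representation $\VV$, by transporting positivity along two based morphisms of $\U$-modules: a based surjection from a power of $\VV$ onto a tensor product of exterior powers, and a based embedding of the product of simple modules into that same tensor product of exterior powers. Because based morphisms carry the canonical basis to the canonical basis (or zero) and, by Lemma~\ref{lem:Uibased}, the $\imath$-canonical basis to the $\imath$-canonical basis (or zero), both steps preserve $\N[q]$-positivity.

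Concretely, I would first apply Lemma~\ref{lem:emb} separately to $L(\lambda_1) \otimes \cdots \otimes L(\lambda_l)$ and to $L(\lambda_{l+1}) \otimes \cdots \otimes L(\lambda_k)$, obtaining based embeddings $\iota_1: L(\lambda_1) \otimes \cdots \otimes L(\lambda_l) \hookrightarrow W_1$ and $\iota_2: L(\lambda_{l+1}) \otimes \cdots \otimes L(\lambda_k) \hookrightarrow W_2$, where each $W_j$ is a tensor product of exterior powers of $\VV$. By Lemma~\ref{lem:tensor}, $\iota := \iota_1 \otimes \iota_2$ is then a based embedding that respects the splitting at position $l$. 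By Corollary~\ref{cor:Ubased}, each $W_j$ is a based quotient of some $\VV^{\otimes \alpha}$ or $\VV^{\otimes \beta}$, and by Lemma~\ref{lem:tensor} again the tensor of the two projection maps is a based surjection $P: \VV^{\otimes \alpha} \otimes \VV^{\otimes \beta} \twoheadrightarrow W_1 \otimes W_2$.

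Given $b^\imath \in B^\imath(L(\lambda_1) \otimes \cdots \otimes L(\lambda_k))$, Lemma~\ref{lem:Uibased} applied to $\iota$ yields $\iota(b^\imath) = d^\imath$ for some $d^\imath \in B^\imath(W_1 \otimes W_2)$. Since $P$ is a based surjection, I next lift $d^\imath$ to some $c^\imath \in B^\imath(\VV^{\otimes(\alpha+\beta)})$ with $P(c^\imath) = d^\imath$ and apply $P$ term-by-term to the $\N[q]$-expansion of $c^\imath$ supplied by Proposition~\ref{prop:VV}. Each factor $\pi_1(c^\imath_\alpha)$ lands in $B^\imath(W_1) \cup \{0\}$ and $\pi_2(c_\beta)$ in $B(W_2) \cup \{0\}$; collecting equal images and invoking uniqueness of expansion in the basis $\{d^\imath_1 \otimes d_2\}$ of $W_1 \otimes W_2$ gives an $\N[q]$-expansion of $d^\imath$. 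Finally, pulling back through $\iota$---using that the image of $\iota$ is a based submodule whose $\imath$-canonical-basis elements are precisely $\{\iota_1(b^\imath_\alpha) \otimes \iota_2(b_\beta)\}$---recovers the desired $\N[q]$-expansion of $b^\imath$.

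The step that will demand the most care is the descent through $P$: one must verify that the projected expansion really \emph{is} the unique $\imath$-canonical expansion on $W_1 \otimes W_2$, rather than merely some positive identity. This rests on uniqueness of expansion in a basis together with the fact that $P$ is based; once this is granted, the remainder of the argument is routine bookkeeping with Lemmas~\ref{lem:tensor} and \ref{lem:Uibased}.
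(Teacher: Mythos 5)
Your proof is correct and follows essentially the same approach as the paper: reduce to Proposition~\ref{prop:VV} by transporting positivity along a based surjection $\VV^{\otimes(\alpha+\beta)} \twoheadrightarrow W_1 \otimes W_2$ and a based embedding $L(\lambda_1)\otimes\cdots\otimes L(\lambda_k) \hookrightarrow W_1 \otimes W_2$, with Lemmas~\ref{lem:tensor} and~\ref{lem:Uibased} guaranteeing compatibility with the split at position $l$ and with both the canonical and $\imath$-canonical bases. The paper merely organizes the two reductions sequentially (first settling the fundamental-weight case via the surjection $\pi$, then handling general $\lambda_i$ via the embedding from Lemma~\ref{lem:emb}) rather than combining them in a single lift-and-project step as you do.
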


\begin{proof}
We first consider the case where $\lambda_i = \omega_{m_i}$ for $1 \le m_i \le n$. Recall that we have 
\[ L(\omega_{m_i}) \cong \wedge^{m_i} \VV.
\]
Let $m_1 + m_2 + \dots + m_l =\alpha$ and $m_{l+1} + m_{l+2} + \dots + m_k= \beta$ . Recall the based surjective morphism
\[
\pi = \pi_{\alpha} \otimes \pi_\beta: 
\VV^{\otimes \alpha+\beta}  \cong \VV^{\otimes \alpha} \otimes \VV^{\otimes \beta}  \twoheadrightarrow \wedge^{m_1} \VV \otimes \wedge^{m_2} \VV \otimes \cdots \otimes \wedge^{m_k}\VV .
\]

Thanks to Lemma~\ref{lem:Uibased}, all three morphisms $\pi$, $\pi_{\alpha}$, and $\pi_{\beta}$ preserve both canonical bases and $\imath$-canonical bases. Therefore applying  $\pi$ to the identity in Proposition~\ref{prop:VV}, we have 
\[
	\pi(b^\imath) = \sum_{(b_\alpha, b_\beta) \in B_\alpha \times B_\beta, } t_{b;b_\alpha, b_\beta} \Big(\pi_\alpha(b^\imath_\alpha) \otimes \pi_\beta(b_\beta) \Big), \qquad \text{ with } t_{b;b_\alpha, b_\beta} \in \N[q].
\]
The proposition follows in this case.

For  general $\lambda_i \in X^+$, the theorem follows from the previous case and the following based embedding thanks to Lemma~\ref{lem:emb} (for some suitable $m_1, m_2 , \dots, m_s $):
\[
L(\lambda_1) \otimes \cdots \otimes L(\lambda_k) \longhookrightarrow \wedge^{m_1} \VV \otimes \wedge^{m_2} \VV \otimes \cdots \otimes \wedge^{m_s}\VV.
\]
\end{proof}



\subsection{} In this section, we consider the quantum symmetric pair with another set of parameters. Their connection with the BGG category $\mc{O}$ of type $D$ was studied in \cite{ES13}. We developed the relevant theory of $\imath$-canonical bases in \cite{Bao17}, which were then used to reformulate the Kazhdan-Lusztig theory of type $D$. 

\begin{definition}
\begin{enumerate}
	\item If $n+1 = 2r+2$ is even, then the coideal subalgebra $\Ui_1$ of $\U$ is defined to be the $\Qq$-subalgebra generated by
\[
B_i = E_i + q^{\delta_{i, \frac{n+1}{2}}} F_{n+1-i} K^{-1}_i, \quad K_i K^{-1}_{n+1-i}, \quad i = 1, 2, \dots,n.
\]
	\item If  $n+1 = 2r+1$ is odd, then the coideal subalgebra $\Ui_1$ of $\U$ is defined to be the $\Qq$-subalgebra generated by
\[
B_i = E_i +  q^{-\delta_{i,\frac{n}{2}}}F_{n+1-i} K^{-1}_i, \quad K_i K^{-1}_{n+1-i}, \quad i = 1, 2, \dots,n.
\]
\end{enumerate}
\end{definition}

We shall again treat both cases simultaneously. The algebra $\Ui_1$ admits an anti-linear involution $\psi_\imath$ such that 
\[
\psi_\imath(B_i) = B_i, \quad \psi_\imath(K_i K^{-1}_{n+1-i}) = K^{-1}_i K_{n+1-i}.
\]
For any based $\U$-module $(M, B)$, we can again define the anti-linear involution (entirely similar to \eqref{eq:bar})
\[\psi_\imath : M \longrightarrow M.
\]

\begin{prop}\cite[Theorem~2.15]{Bao17}
 Let $(M, B)$ be a based $\U$-module. Then there exists a unique $\imath$-canonical basis $B^\imath = \{b^\imath \vert b \in B\}$ of the $\Ui_1$-module $M$, such that 
	\begin{equation}\label{eq:iCB}
		\psi_\imath( b^\imath) = b ^\imath, \quad \text{ and } \quad b^\imath = b + \sum_{b \in B} t_{b;b'} b', \quad \text{with } t_{b;b'} \in q\Z[q].
	\end{equation}
\end{prop}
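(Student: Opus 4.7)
The plan is to mimic the strategy used for the pair $(\U,\Ui)$ in Proposition~\ref{prop:iCB}, i.e.\ to realize the $\imath$-canonical basis as the unique bar-invariant completion of the canonical basis $B$ with respect to an involution $\psi_\imath$ that is ``triangular'' with respect to the weight grading. The whole argument reduces to verifying two inputs to Lusztig's lemma: (a) $\psi_\imath$ is an anti-linear involution on $M$, and (b) each $b\in B$ satisfies $\psi_\imath(b)=b+\sum_{b'} a_{b;b'} b'$ with $a_{b;b'}\in\Z[q,q^{-1}]$ and the $b'$'s appearing strictly ``lower'' in a sense that ensures unitriangular invertibility.

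First I would construct an intertwiner $\Upsilon_1$ for the pair $(\U,\Ui_1)$ living in the appropriate completion of $\U^-$, characterized by the identity $\psi_\imath(u)\,\Upsilon_1 = \Upsilon_1\,\psi(u)$ for all $u\in\Ui_1$. This is the analog of the $\Upsilon$ from \cite{BW13}; its existence and uniqueness follow from solving, weight-by-weight, the system of linear equations imposed by applying the above identity to the generators $B_i$ and $K_iK_{n+1-i}^{-1}$. Because the generators $B_i$ differ from the previous case only by scalar rescalings and the removal of the extra $K_i^{-1}$ term at the middle node, the recursion for $\Upsilon_1$ is of the same shape and has a unique solution with constant term $1$ at weight zero. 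One then checks that $\Upsilon_1$ acts well-definedly on any finite-dimensional $\U$-module, since only finitely many weight components contribute on each vector.

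Next I would define $\psi_\imath=\Upsilon_1\circ\psi$ on $M$ and verify it is an involution. Applying $\psi_\imath$ twice and using the defining intertwining identity, together with $\psi^2=\id$, yields $\psi_\imath^2 = \Upsilon_1\cdot \psi(\Upsilon_1)$, and one shows this equals $\id$ by running the same kind of weight-graded uniqueness argument used for $\Upsilon$. Writing $b\in B$ and expanding $\Upsilon_1 \psi(b) = \Upsilon_1 b = b + (\text{lower-weight corrections})$, I obtain the required unitriangularity: $\psi_\imath(b) = b + \sum_{b'\prec b} a_{b;b'} b'$ with $a_{b;b'}\in\Z[q,q^{-1}]$, where $\prec$ refers to the partial order on $B$ induced by the weight filtration (the same one used in Proposition~\ref{prop:iCB}). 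Once this is established, Lusztig's lemma (see \cite[\S24.2]{Lu94}) applied to $\psi_\imath$ with this partial order and the $q\Z[q]$-lattice produces, for each $b\in B$, a unique $\psi_\imath$-invariant element of the form $b^\imath = b+\sum_{b'} t_{b;b'} b'$ with $t_{b;b'}\in q\Z[q]$, and the set $\{b^\imath : b\in B\}$ is then a $\Qq$-basis of $M$.

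The main obstacle is step one: producing $\Upsilon_1$ and verifying that $\psi_\imath$ is an involution with integrality and triangularity on a based $\U$-module. This requires a careful bookkeeping of the modified formulas for the generators $B_i$ of $\Ui_1$ (particularly at the middle nodes, where the parameters differ from the AIII/AIV case) and checking that the recursion defining $\Upsilon_1$ has coefficients in $\Z[q,q^{-1}]$ with vanishing at weight zero apart from the constant term. Once these structural properties are in hand, existence and uniqueness of the $\imath$-canonical basis follow formally by Lusztig's lemma exactly as in the proof of Proposition~\ref{prop:iCB}.
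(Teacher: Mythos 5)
The paper offers no proof of this statement---it is imported directly from \cite[Theorem~2.15]{Bao17}, which in turn follows the template of \cite[Theorems~4.20 and 6.22]{BW13}. Your sketch faithfully reproduces that strategy (construct the quasi-$K$-matrix $\Upsilon_1$ for $(\U,\Ui_1)$, define $\psi_\imath=\Upsilon_1\circ\psi$, verify it is an integral, unitriangular anti-linear involution on the canonical basis of any finite-dimensional based $\U$-module, then apply Lusztig's lemma), so the approach matches.
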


\begin{rem}
Note that the basis we obtained here is generally different from that of Proposition~\ref{prop:iCB}, since we have taken a different subalgebra $\Ui_1$. 
\end{rem}

%
%
%

\begin{thm}\label{thm:Bao17}
Let $L(\lambda_i)$ be finite-dimensional simple $\U$-modules for $i = 1$, $\dots$, $k$. For any $0 \le l \le k$, we consider the based modules $(L(\lambda_1) \otimes \cdots \otimes L(\lambda_l), B_{\alpha})$, $(L(\lambda_{l+1}) \otimes \cdots \otimes L(\lambda_k), B_{\beta})$, and $(L(\lambda_1) \otimes \cdots \otimes L(\lambda_k), B)$. For any $b \in B$, we have 
\[
		b^\imath = \sum_{(b_\alpha, b_\beta) \in B_\alpha \times B_\beta, } t_{b;b_\alpha, b_\beta} (b^\imath_\alpha \otimes b_\beta), \qquad \text{ with } t_{b;b_\alpha, b_\beta} \in \N[q].
\]	
\end{thm}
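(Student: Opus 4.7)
The plan is to mirror the proof of Theorem~\ref{thm:BW13} step by step, substituting each ingredient tied to $\Ui$ by its $\Ui_1$-counterpart developed in \cite{Bao17}. The three pillars of the earlier argument are: (a) the positivity for $\VV^{\otimes m}$ (Proposition~\ref{prop:VV}); (b) the based surjection of Corollary~\ref{cor:Ubased} together with the fact that based $\U$-morphisms preserve $\imath$-canonical bases (Lemma~\ref{lem:Uibased}); (c) the based embedding of Lemma~\ref{lem:emb} for an arbitrary dominant weight. Items (b) and (c) are intrinsic to the $\U$-module structure; in particular, the proof of Lemma~\ref{lem:Uibased} used only that $\Upsilon$ lies in a completion of $\U$ and commutes with based morphisms via $\psi$. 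Since the intertwiner for $(\U, \Ui_1)$ has the same formal properties, both (b) and (c) transport without change to the present setting.

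The substantive step is therefore to establish the $\Ui_1$-analogue of Proposition~\ref{prop:VV}. For this I would first invoke the $\imath$-Schur duality for $(\U, \Ui_1)$ from \cite{Bao17}: the $\Ui_1$-action on $\VV^{\otimes m}$ commutes with the action of a suitable Hecke algebra (of type $D$, or the relevant variant arising in the type D Kazhdan--Lusztig reformulation), and the $\imath$-canonical basis on $\VV^{\otimes m}$ is identified with a parabolic Kazhdan--Lusztig basis of that algebra. Granted this, I would decompose the Hecke action compatibly with the tensor factorization $\VV^{\otimes m} = \VV^{\otimes \alpha} \otimes \VV^{\otimes \beta}$ via an appropriate parabolic subalgebra of the form $\mc{H}_I \times \mc{H}_{A_{\beta-1}}$, so that the product basis $\{b^\imath_\alpha \otimes b_\beta\}$ matches the parabolic Kazhdan--Lusztig basis of this parabolic (exactly as in the proof of Proposition~\ref{prop:VV}). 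The transition coefficients between the ambient and the smaller parabolic Kazhdan--Lusztig bases are then nonnegative by Propositions~\ref{prop:MJ} and~\ref{prop:GH}, which are purely Hecke-theoretic and hold for any finite Weyl group.

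With the tensor-power version in hand, I would handle the fundamental-weight case $\lambda_i = \omega_{m_i}$ by applying the based surjection
\[
\pi = \pi_\alpha \otimes \pi_\beta : \VV^{\otimes \alpha} \otimes \VV^{\otimes \beta} \twoheadrightarrow \wedge^{m_1}\VV \otimes \cdots \otimes \wedge^{m_k}\VV
\]
from Corollary~\ref{cor:Ubased} to the tensor-power identity; by Lemma~\ref{lem:Uibased} applied to $\Ui_1$, each of $\pi_\alpha$, $\pi_\beta$, $\pi$ sends $\imath$-canonical basis elements to $\imath$-canonical basis elements or zero, so the positivity descends. For general $\lambda_i \in X^+$, Lemma~\ref{lem:emb} supplies a based embedding into a suitable tensor product of wedge powers, and the same transfer completes the proof.

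The only genuinely new input beyond the argument for Theorem~\ref{thm:BW13} is the $\Ui_1$-analogue of Proposition~\ref{prop:BD}, namely the $\imath$-Schur duality and the identification of the $\imath$-canonical basis on $\VV^{\otimes m}$ with a parabolic Kazhdan--Lusztig basis in the type D setting. This is the main obstacle one would need to produce from first principles; once it is imported from \cite{Bao17}, the remainder of the proof is a direct transcription of the type B/C argument above.
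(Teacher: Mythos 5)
Your proposal is correct and takes essentially the same approach as the paper: the paper's proof of Theorem~\ref{thm:Bao17} is a one-paragraph remark stating that the argument is entirely parallel to Theorem~\ref{thm:BW13}, with the type $B_m$ $\imath$-Schur duality replaced by the duality of $\Ui_1$ with the type $D_m$ Hecke algebra (equivalently, type $B_m$ with unequal parameters) established in \cite{Bao17, ES13}. You have correctly identified the $\Ui_1$-analogue of Proposition~\ref{prop:BD} as the only new input, and correctly observed that Propositions~\ref{prop:MJ} and~\ref{prop:GH} and the based-module transfer machinery (Lemmas~\ref{lem:Uibased}, \ref{lem:emb}, Corollary~\ref{cor:Ubased}) carry over unchanged.
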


\begin{proof}
The proof is entirely similar to that in Theorem~\ref{thm:BW13}. In this case, we can either consider the duality between the coideal subalgebra $\Ui_1$ with the Hecke algebra of type $D_m$, or with the Hecke algebra of type $B_m$ of unequal parameters. More details of the dualities can be found in \cite{Bao17, ES13}.
\end{proof}






\end{document}